\documentclass[11pt]{article}

\usepackage[a4paper,margin=1in]{geometry}
\usepackage{amsmath,amssymb,amsthm}
\usepackage{hyperref}
\usepackage{enumitem}

\newtheorem{theorem}{Theorem}[section]
\newtheorem{proposition}[theorem]{Proposition}

\newtheorem{corollary}[theorem]{Corollary}
\newtheorem{definition}[theorem]{Definition}

\title{Terminal Ambiguity Geometry for Finite-Dimensional Algebras}
\author{
Joe Gildea\\
Department of Computing Science and Mathematics,\\
School of Informatics and Creative Arts,\\
Dundalk Institute of Technology\\
\texttt{gildeajoe@gmail.com}}
\date{}

\begin{document}

\maketitle

\begin{abstract}

Radical truncation forgets information about a module. We study the ambiguity created by this loss: which nonisomorphic modules remain indistinguishable after all proper radical truncations have been fixed, how the resulting families are organized, and what information can be recovered from them. We introduce terminal ambiguity geometry to make this residual indeterminacy into a geometric object. The remaining ambiguity is governed by the terminal radical layer, and we classify terminal completions by Grassmannian orbit spaces, with projective ambiguity appearing as the rank-one case. This classification gives criteria for rigidity and nonuniqueness and, in the split case, determines the dimensions of the completion spaces and gives exact finite-field counts of nonisomorphic terminal completions.

We also establish reconstruction and symmetry results for the rank-one projective geometry. Under suitable rank hypotheses, the ambiguity geometry recovers the corresponding residue division-algebra data, terminal species, and terminal radical layer, while its automorphisms are described by classical projective semilinear groups. Thus the paper studies both sides of radical truncation: the ambiguity produced by the information it forgets and the extent to which that information can be recovered from the resulting geometry.
\end{abstract}

\textbf{Mathematics Subject Classification (2020).}
Primary 16G10; Secondary 16G20, 51A35, 16D90.

\medskip
\medskip
\noindent\textbf{Keywords.}
Finite-dimensional algebra, terminal ambiguity geometry, species,
projective geometry over division algebras, radical layers,
module reconstruction, terminal species, ambiguity fibres,
projective semilinear groups.

\section{Introduction}

The representation theory of finite-dimensional algebras has long
emphasized the role of radical filtrations, species, and
division-algebra valued invariants. Classical work of Gabriel, Dlab,
and Ringel showed that important structural information about an
algebra can be encoded in the interaction between its simple modules
and successive radical layers \cite{Gabriel72,DlabRingel76}. These
ideas have become fundamental in the modern representation theory of
Artin and finite-dimensional algebras \cite{ARS95,ASS06}.

Throughout this paper, let \(\Lambda\) be a finite-dimensional basic
algebra over a field \(k\), with Jacobson radical \(J\). For a
finite-dimensional \(\Lambda\)-module \(M\), the radical truncations
\[
M/JM,\;
M/J^2M,\;
\dots,\;
M/J^{n-1}M
\]
retain progressively more information about \(M\), while forgetting
its deeper radical structure. The perspective adopted here reverses
the usual question. Rather than asking what the radical filtration
records, we ask what information is lost under radical truncation and
what can be recovered from the resulting ambiguity.

For \(d\geq1\), write
\[
\tau_d(M)=M/J^dM.
\]
The ambiguity created by this truncation is measured by
\[
\mathcal F_d(M)
=
\Bigl\{
N\in\mathrm{mod}\text{-}\Lambda :
\tau_d(N)\cong\tau_d(M)
\Bigr\}/\cong.
\]
Thus \(\mathcal F_d(M)\) records the isomorphism classes of modules
that become indistinguishable after the \(d\)-th radical truncation.
The object of study is therefore the ambiguity itself: the information
about a module that is no longer determined once a radical truncation
has been fixed. In this sense, the ambiguity fibres
\(\mathcal F_d(M)\) are the basic objects of the theory developed
here. The central questions are what structure these fibres have, how
they can be classified and measured, and what information about the
original module-theoretic data can be recovered from them.

The present paper studies these questions at the terminal level, where
the ambiguity problem admits a particularly explicit solution.
Suppose \(J^n=0\). After the complete tower of proper radical
truncations has been fixed, the remaining ambiguity is concentrated
in the final nonzero radical layer \(J^{n-1}\). The relations
\(JJ^{n-1}=J^{n-1}J=0\) make this layer a semisimple
\((\Lambda/J)\)-bimodule, reducing the terminal ambiguity problem to
submodule geometry over residue division algebras. For primitive
idempotents \(e_i,e_j\), set
\[
V_{ij}=e_jJ^{n-1}e_i,
\qquad
D_h=e_h\Lambda e_h/e_hJe_h.
\]
Since \(\Lambda\) is basic, \(V_{ij}\) is naturally a
\((D_j,D_i)\)-bimodule. The left \(D_j\)-structure determines the
admissible terminal submodules, while the right \(D_i\)-structure
governs when two admissible choices yield isomorphic quotient
modules. This bimodule structure therefore supplies the algebraic
data governing the terminal ambiguity left by proper radical
truncation.

We introduce \emph{terminal ambiguity geometry} 
\(\mathfrak A^{\mathrm{top}}(\Lambda)\) as the structure carried by 
the ambiguity fibres that remain after all proper radical truncations 
have been fixed. The main classification theorem determines these 
fibres explicitly. For terminal quotient rank \(s\), one has 
\[ 
\mathcal F^{<n,\mathrm{top},s}_{ij} 
\cong 
\operatorname{Gr}_{D_j} 
(\ell_{ij}-s,V_{ij})/D_i^\times, 
\qquad 
\ell_{ij}=\dim_{D_j}V_{ij}. 
\] 
Thus the Grassmannian classification describes the terminal completions 
left unresolved by radical truncation. Varying \(s\) organizes these 
completions by terminal quotient rank, with projective ambiguity 
appearing as the rank-one case. The higher-rank classification shows 
that the information left unresolved by proper radical truncation is 
not exhausted by the rank-one projective fibre. 
 
This classification has a direct module-theoretic meaning. For an 
admissible left \(D_j\)-submodule \(U\subseteq V_{ij}\), the 
corresponding quotient \(M_U\) satisfies 
\[ 
M_U/J^dM_U 
\cong 
P_i/J^dP_i 
\qquad 
(1\leq d<n). 
\] 
Hence the Grassmannian points represent admissible terminal choices, 
while the \(D_i^\times\)-orbits represent isomorphism classes of 
modules having the same complete tower of proper radical truncations. 
The geometry therefore describes precisely the terminal information 
left unresolved by radical truncation. The classification gives a 
rigidity criterion: transitivity corresponds to a unique terminal 
completion, while failure of transitivity gives nonisomorphic 
completions with the same proper radical truncation data. 
 
In the split case, where \(D_i\cong D_j\cong k\), the classification 
becomes an ordinary Grassmannian: 
\[ 
\mathcal F^{<n,\mathrm{top},s}_{ij} 
\cong 
\operatorname{Gr}_k(m_{ij}-s,V_{ij}), 
\qquad 
m_{ij}=\dim_kV_{ij}. 
\] 
Their dimensions are 
\[ 
\dim 
\mathcal F^{<n,\mathrm{top},s}_{ij} 
= 
s(m_{ij}-s), 
\] 
so that
\[
\delta_{ij}(s)=s(m_{ij}-s)
\]
gives a dimension profile measuring the geometric size of the family
of rank-\(s\) terminal completions left undetermined by the same
proper radical truncation tower. This profile is maximal when \(s\)
is as close as possible to \(m_{ij}/2\), with
\[
\max_s\delta_{ij}(s)
=
\left\lfloor\frac{m_{ij}^2}{4}\right\rfloor.
\]
Over \(k=\mathbb F_q\), their cardinalities are 
\[ 
\left| 
\mathcal F^{<n,\mathrm{top},s}_{ij} 
\right| 
= 
{m_{ij}\brack s}_q. 
\] 
Thus terminal ambiguity can be measured geometrically and, over finite 
fields, the number of nonisomorphic terminal completions remaining 
indistinguishable by the same proper radical truncation data can be 
enumerated exactly. In particular, higher terminal ranks can exhibit 
more ambiguity than the rank-one projective fibre. 
 
The paper therefore has two complementary directions. The first 
determines and measures the ambiguity produced by radical truncation; 
the second asks how much of the information lost under truncation can 
be recovered from the structure of that ambiguity. Projective geometry 
provides classical reconstruction principles through the Fundamental 
Theorem of Projective Geometry and its generalizations over division 
rings \cite{Artin57,FaureFrolicher00}. Applied to the rank-one terminal 
ambiguity geometry, these principles lead, under the stated rank 
hypotheses, to reconstruction of the corresponding 
division-algebra-valued data, terminal species, and terminal radical 
layer. In the projective case, the automorphisms of an individual 
terminal ambiguity fibre are governed by projective semilinear 
groups. Thus the passage from radical truncation to ambiguity is not 
only a classification of what has been forgotten: in suitable cases, 
the geometry of that ambiguity allows terminal information to be 
recovered. 
 
Species enter this picture as algebraic data governing the terminal 
radical layer. Species theory records division algebras and bimodule 
structure, whereas terminal ambiguity geometry asks how that data is 
realized through modules that remain indistinguishable after proper 
radical truncation. The resulting ambiguity geometry therefore does 
not need to contain algebraic information independent of the full 
bimodule species. Its role is to turn the information lost under 
truncation into a concrete module-completion problem and to describe 
its classification, rigidity, nonuniqueness, dimension, enumeration, 
and symmetry. 
 
The algebraic background used throughout draws on standard results 
from the theory of Artin algebras, modules, radicals, and 
division-algebra valued structures 
\cite{ARS95,ASS06,Lam99,Pierce82,Jacobson85}. The species-theoretic 
viewpoint originates in the work of Gabriel and of Dlab and Ringel, 
where division-algebra valued data associated to radical layers was 
used to encode representation-theoretic structure 
\cite{Gabriel72,DlabRingel76}. The projective-geometric methods used 
here are inspired by the classical theory of projective spaces over 
division rings developed by Artin and by Faure and Fr\"olicher 
\cite{Artin57,FaureFrolicher00}. Recent work continues to illustrate 
the depth of the interaction between homological invariants and the 
structure of finite-dimensional algebras 
\cite{Ringel22,RingelZhang22}. Although the present work is not 
homological in nature, it similarly seeks to extract intrinsic 
structure from a distinguished layer of the algebra, namely the 
terminal radical layer. 
 
The paper is organized as follows. Section~2 introduces ambiguity 
fibres, radical truncation, terminal radical bimodules, and terminal 
ambiguity geometry. Section~3 develops the top-layer ambiguity 
construction and the isomorphism relation governing terminal 
completions. Section~4 develops projectivization over division 
algebras. Section~5 gives the rank-one classification of terminal 
ambiguity in projective terms. Section~6 develops the Grassmannian 
classification of higher-rank terminal ambiguity and, in the split 
case, measures this ambiguity through its dimension profile and 
finite-field enumeration. The subsequent sections develop the 
reconstruction theory, recover the terminal species and terminal 
radical layer under the stated hypotheses, and study the resulting 
symmetry theory.

\section{Terminal Ambiguity Fibres}

This section introduces the ambiguity fibres that motivate the geometric constructions developed later in the paper. We begin with the general ambiguity arising from radical truncation and then isolate the terminal ambiguity that survives after all proper truncations have been fixed. The central theme is that this residual ambiguity is governed entirely by the terminal radical layer \(J^{n-1}\). Throughout, let \(\Lambda\) be a finite-dimensional algebra over a field \(k\), and let \(J=\operatorname{rad}(\Lambda)\) denote its Jacobson radical. Since \(\Lambda\) is finite-dimensional, \(J\) is nilpotent, so there exists an integer \(n\ge1\) such that \(J^n=0\). We write \(\Lambda/J\) for the semisimple quotient algebra. Let \(e_1,\ldots,e_t\) be a complete set of primitive idempotents of \(\Lambda\). For each \(i\), set \(P_i=\Lambda e_i\) and \(D_i=e_i\Lambda e_i/e_iJe_i\). Then \(D_i\) is a division algebra and coincides with the endomorphism division algebra of the simple top of \(P_i\).

The final nonzero radical layer \(J^{n-1}\) plays a distinguished role throughout the paper. Since \(J^{n-1}J=J^n=0\), it is naturally a semisimple \((\Lambda/J)\)-bimodule. For primitive idempotents \(e_i,e_j\), define \(V_{ij}=e_jJ^{n-1}e_i\). Then \(V_{ij}\) is naturally a \((D_j,D_i)\)-bimodule \({}_{D_j}V_{ij}{}_{D_i}\). Let \(M\) be a finite-dimensional \(\Lambda\)-module. For each integer \(d\ge1\), define the \(d\)-th radical truncation by \(\tau_d(M)=M/J^dM\). When \(d<n\), the quotient \(\tau_d(M)\) forgets the deepest radical layers of \(M\). We shall refer to truncations with \(d<n\) as \emph{proper truncations}. The natural equivalence relation induced by radical truncation is the following. Two modules are regarded as indistinguishable at level \(d\) if their \(d\)-th radical truncations are isomorphic.

\begin{definition}
Let \(M\) be a finite-dimensional \(\Lambda\)-module and let \(d\ge1\). The ambiguity fibre of \(M\) at level \(d\) is
\[
\mathcal F_d(M)
=
\Bigl\{
N\in\mathrm{mod}\text{-}\Lambda :
\tau_d(N)\cong\tau_d(M)
\Bigr\}/\cong.
\]
Thus \(\mathcal F_d(M)\) records all isomorphism classes of modules having the same \(d\)-th radical truncation as \(M\).
\end{definition}

The original motivation for this work is to understand the geometry of these ambiguity fibres. The results of the present paper show that, at the terminal level, this ambiguity is controlled entirely by the final radical layer \(J^{n-1}\). This leads to a local ambiguity invariant attached to each nonzero component \(V_{ij}=e_jJ^{n-1}e_i\).

\begin{definition}
Fix primitive idempotents \(e_i,e_j\) and suppose \(V_{ij}=e_jJ^{n-1}e_i\neq0\). The terminal ambiguity fibre \(\mathcal F^{<n,\mathrm{top}}_{ij}\) is the ambiguity surviving after all proper truncations have been fixed and only the terminal layer remains undetermined.
\end{definition}

The key observation is that every proper truncation annihilates the final radical layer \(J^{n-1}\). Consequently, all ambiguity surviving proper truncation is concentrated in the semisimple bimodule \(J^{n-1}\). For a fixed pair \((i,j)\), the relevant local contribution is the component \(V_{ij}=e_jJ^{n-1}e_i\). Since \(V_{ij}J=0\), the natural right action of \(e_i\Lambda e_i\) on \(V_{ij}\) factors through the residue division algebra \(D_i=e_i\Lambda e_i/e_iJe_i\).

Thus terminal ambiguity reduces to a problem in linear algebra over division algebras. The remainder of the paper is devoted to determining the geometry of the fibres \(\mathcal F^{<n,\mathrm{top}}_{ij}\) and their relationship with the terminal species \(\mathsf{Sp}^{\mathrm{top}}(\Lambda)=\bigl(D_i,{}_{D_j}V_{ij}{}_{D_i}\bigr)_{i,j}\).

\section{Top-Layer Ambiguity}

The objective of this section is to determine the structure of the terminal ambiguity fibre introduced in the previous section. The first step is to identify the group acting on the terminal component \(V_{ij}=e_jJ^{n-1}e_i\). Although this action arises from the unit group of the corner algebra \(e_i\Lambda e_i\), the nilpotence of the radical implies that only the associated residue division algebra is visible on the terminal layer.

\begin{theorem}\label{thm:top-layer-factorization}
Let $\Lambda$ be a finite-dimensional algebra with radical $J$ satisfying $J^n=0$ for some \(n\ge 2\). Let \(e_i,e_j\) be primitive idempotents and set $V=e_jJ^{n-1}e_i$.
Then the action of $(e_i\Lambda e_i)^\times$ on \(V\) by right multiplication factors through $D_i^\times =(e_i\Lambda e_i/e_iJe_i)^\times$. Equivalently, the subgroup $1+e_iJe_i$
acts trivially on $V=e_jJ^{n-1}e_i$.
\end{theorem}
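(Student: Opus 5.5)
The plan is to verify directly that every element of \(1+e_iJe_i\) fixes \(V=e_jJ^{n-1}e_i\) under right multiplication, and then deduce the factorization through \(D_i^\times\) from the structure of the unit groups.

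First, the right action is well defined. For \(v\in V\) and \(u\in(e_i\Lambda e_i)^\times\), write \(v=e_jxe_i\) with \(x\in J^{n-1}\). Then
\[
vu=e_j\,x\,e_iu ,
\]
and \(x\,e_iu\in J^{n-1}\Lambda\subseteq J^{n-1}\), since \(J^{n-1}\) is a two-sided ideal. Also \(e_iu=u=ue_i\), so \(vu\in e_jJ^{n-1}e_i=V\). Associativity and \(ve_i=v\), where \(e_i\) is the identity of the corner algebra, make this a right action of the group \((e_i\Lambda e_i)^\times\).

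Second, the subgroup acts trivially. Take \(g=e_i+r\) with \(r\in e_iJe_i\); here "\(1\)" in \(1+e_iJe_i\) means the identity \(e_i\) of \(e_i\Lambda e_i\). Since \(e_iJe_i\subseteq J\) is nilpotent, \(g\) is a unit with inverse \(\sum_{m\ge0}(-r)^m\). For \(v\in V\),
\[
vg=ve_i+vr=v+vr .
\]
Moreover \(vr\in J^{n-1}J=J^n=0\). Hence \(vg=v\), which proves the "equivalently" formulation.

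Third, the factorization. Consider the reduction map \(\pi:e_i\Lambda e_i\to D_i=e_i\Lambda e_i/e_iJe_i\). Its restriction to units is a group homomorphism \((e_i\Lambda e_i)^\times\to D_i^\times\) with kernel \((e_i+e_iJe_i)\cap(e_i\Lambda e_i)^\times\), and this equals \(e_i+e_iJe_i\) by the second step.

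This homomorphism is surjective. If \(\pi(a)\) is invertible in \(D_i\), pick \(b\) with \(ab\in e_i+e_iJe_i\) and \(ba\in e_i+e_iJe_i\). Both products are units by nilpotence, so \(a\) has a left and a right inverse and is therefore a unit. Alternatively, one can use that \(e_iJe_i\) is the radical of the local ring \(e_i\Lambda e_i\).

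So \(D_i^\times\cong(e_i\Lambda e_i)^\times/(e_i+e_iJe_i)\). An action that is trivial on the kernel then descends to \(D_i^\times\) via \(v\cdot\pi(u):=vu\), and this is well defined because \(u'=ug\) with \(g\) in the kernel gives \(vu'=(vu)g=vu\). The same computation shows the right \(e_i\Lambda e_i\)-module structure on \(V\) factors through \(D_i\), since \(V\cdot e_iJe_i=0\).

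No step is a real obstacle. The only points needing care are the interpretation of \(1\) as \(e_i\) and the surjectivity of units onto \(D_i^\times\), which the lifting argument above handles.
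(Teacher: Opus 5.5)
Your proposal is correct and follows essentially the same route as the paper: the key computation $v(e_i+r)=v+vr$ with $vr\in J^{n-1}J=J^n=0$, followed by the identification $(e_i\Lambda e_i)^\times/(1+e_iJe_i)\cong D_i^\times$. The only differences are that you also check that right multiplication is a well-defined action on $V$, and that you prove surjectivity of $(e_i\Lambda e_i)^\times\to D_i^\times$ by a direct unit-lifting argument (which needs no primitivity), whereas the paper simply invokes that $e_i\Lambda e_i$ is local with radical $e_iJe_i$.
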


\begin{proof}
Let $u\in 1+e_iJe_i$. Then $u=e_i+r$ for some $r\in e_iJe_i$. Let $v\in V=e_jJ^{n-1}e_i$. Then $vu=v(e_i+r)=ve_i+vr$. Since $v\in e_jJ^{n-1}e_i$, we have $ve_i=v$. Moreover, $v\in J^{n-1}$ and $r\in J$. Therefore $vr\in J^n=0$. Hence $vu=v$. Thus every element of $1+e_iJe_i$
acts trivially on \(V\). Therefore the action of $(e_i\Lambda e_i)^\times$ on \(V\) has kernel containing \(1+e_iJe_i\), and hence factors through the quotient $(e_i\Lambda e_i)^\times/(1+e_iJe_i)$. Since \(e_i\) is primitive, the corner algebra $e_i\Lambda e_i$ is local, with radical $e_iJe_i$. Hence
\[
(e_i\Lambda e_i)^\times/(1+e_iJe_i)
\cong
(e_i\Lambda e_i/e_iJe_i)^\times
=
D_i^\times.
\]

Therefore the action factors through $D_i^\times$.
\end{proof}

The preceding result shows that the terminal layer remembers only the action of the residue division algebra \(D_i=e_i\Lambda e_i/e_iJe_i\). Consequently, the classification of terminal ambiguity fibres reduces to a projective-geometric quotient problem over a division algebra. The next theorem identifies the resulting ambiguity fibre explicitly.

\begin{theorem}\label{thm:top-layer-ambiguity}
Let \(\Lambda\) be a finite-dimensional basic split \(k\)-algebra with radical \(J\) satisfying \(J^n=0\). Let \(e_i,e_j\) be primitive idempotents and set
$P_i=\Lambda e_i$, $V_{ij}=e_jJ^{n-1}e_i$. Assume \(V_{ij}\neq0\). Then the intrinsic rank-one terminal ambiguity fibre associated to \(V_{ij}\) admits a natural description
$\mathcal F^{<n,\mathrm{top}}_{ij} \cong \mathbb P_k(V_{ij}^*)$.
\end{theorem}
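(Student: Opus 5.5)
The plan is to realize the rank-one terminal ambiguity fibre concretely as a family of quotients of \(P_i=\Lambda e_i\) and to identify the parametrizing set with hyperplanes of \(V_{ij}\). In the basic split case \(D_i\cong D_j\cong k\), so \(V_{ij}\) is simply a finite-dimensional \(k\)-vector space of dimension \(m_{ij}\ge1\). For \(P_i\) one has \(J^{n-1}P_i=J^{n-1}e_i=\bigoplus_h e_hJ^{n-1}e_i\), and this is a semisimple left \(\Lambda\)-module because \(J\cdot J^{n-1}=0\). Its \(S_j\)-isotypic component is \(V_{ij}\), with \(\Lambda\) acting through \(e_j\). The rank-one completions at \((i,j)\) will be the modules
\[
M_U=P_i/U',
\qquad
U'=U\oplus\bigoplus_{h\ne j}e_hJ^{n-1}e_i,
\]
where \(U\subset V_{ij}\) is a \(k\)-hyperplane. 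Such a \(U'\) is automatically a submodule, since any \(k\)-subspace of a semisimple isotypic layer killed by \(J\) is a submodule.

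First, I check that \(M_U\) has the same proper truncations as \(P_i\). Since \(U'\subseteq J^{n-1}P_i\subseteq J^dP_i\) for every \(d<n\), we get \(M_U/J^dM_U=P_i/(J^dP_i+U')=P_i/J^dP_i\). The terminal layer of \(M_U\) is then \(V_{ij}/U\cong S_j\), of rank one. Next, I show that every module \(N\) with \(\tau_{n-1}(N)\cong\tau_{n-1}(P_i)\) and one-dimensional \(S_j\)-terminal socle arises in this way. Since \(N/JN\cong S_i\), the module \(N\) is cyclic, so it is a quotient \(P_i/K\). The condition on \(\tau_{n-1}\) forces \(K\subseteq J^{n-1}P_i\), by a dimension count on \(P_i/J^{n-1}P_i\). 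The rank-one \(S_j\) condition then forces \(K=U'\) for a hyperplane \(U\). This gives a surjection from the set of hyperplanes, which is \(\mathbb P_k(V_{ij}^*)\) via \(U\mapsto\) the line of functionals vanishing on \(U\), onto the fibre.

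The main step is injectivity: \(M_U\cong M_{U_1}\) should imply \(U=U_1\). An isomorphism \(M_U\to M_{U_1}\) lifts through the projective cover to an endomorphism of \(P_i\). This endomorphism is right multiplication by some \(a\in e_i\Lambda e_i\), and \(a\) is a unit because it is an isomorphism on tops. It must carry \(U'\) onto \(U'_1\). By Theorem~\ref{thm:top-layer-factorization}, right multiplication by \(a\) acts on \(V_{ij}\) through \(D_i^\times=k^\times\), that is, by a nonzero scalar. Scalars fix every subspace, so \(U=U_1\). Conversely, equal hyperplanes give equal modules. The \(D_i^\times\)-orbit quotient is therefore trivial, and the fibre is exactly \(\mathbb P_k(V_{ij}^*)\).

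I expect the obstacle to be one specific point in this lifting argument. One must verify that an arbitrary isomorphism of quotients, and not just one induced by an automorphism of \(P_i\), restricts to the right-multiplication action on the terminal layer. One must also check that the scalar action is compatible with the identification of \(V_{ij}\) as a \(k\)-space, which uses the splitness \(e_i\Lambda e_i/e_iJe_i=k\) acting centrally. The dimension count showing \(K\subseteq J^{n-1}P_i\) is routine.
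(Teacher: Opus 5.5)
Your proposal is correct and follows essentially the same route as the paper. You use the quotients $P_i/(W_j\oplus U)$ with $U=\ker\lambda$ a hyperplane of $V_{ij}$, check the truncations via $U'\subseteq J^{n-1}P_i\subseteq J^dP_i$, and get injectivity by lifting an isomorphism $M_U\cong M_{U_1}$ to an automorphism of $P_i$ (right multiplication by a unit of $e_i\Lambda e_i$), which by Theorem~\ref{thm:top-layer-factorization} acts on $J^{n-1}P_i$ by a scalar in $k^\times$ and therefore fixes $U'$.

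Your extra surjectivity step, that every $N$ with $\tau_{n-1}(N)\cong\tau_{n-1}(P_i)$ and $J^{n-1}N\cong S_j$ is isomorphic to some $M_U$, is valid and goes slightly beyond the paper, which simply takes the fibre to be the constructed family. The obstacle you flag is already settled by your own lifting argument, since every isomorphism of these quotients is induced by an automorphism of $P_i$.
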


\begin{proof}
Since \(J^n=0\), we have \(J(J^{n-1}P_i)=0\). Thus \(J^{n-1}P_i\) is semisimple. Let \(W_j=\bigoplus_{h\ne j}e_hJ^{n-1}P_i\). For each nonzero functional \(\lambda\in V_{ij}^*\), define \(K_\lambda=W_j\oplus\ker(\lambda)\subseteq J^{n-1}P_i\) and set \(M_\lambda=P_i/K_\lambda\). Since \(\Lambda\) is split, \(D_j\cong k\), so every \(k\)-subspace of \(V_{ij}\) is a left \(D_j\)-submodule. Hence \(\ker(\lambda)\), and therefore \(K_\lambda\), is a \(\Lambda\)-submodule of \(P_i\). Moreover, \(K_\lambda\subseteq J^{n-1}P_i\subseteq JP_i\), so the canonical surjection \(P_i\twoheadrightarrow M_\lambda\) is a projective cover. Now let \(1\le d<n\). Since \(K_\lambda\subseteq J^{n-1}P_i\subseteq J^dP_i\), we have \(J^dM_\lambda=J^dP_i/K_\lambda\). Therefore \(M_\lambda/J^dM_\lambda\cong(P_i/K_\lambda)/(J^dP_i/K_\lambda)\cong P_i/J^dP_i\). Thus the proper radical truncation tower of \(M_\lambda\) is independent of \(\lambda\). Hence all modules \(M_\lambda\) lie in the same proper truncation ambiguity fibre. It remains to determine when two such modules are isomorphic.

Suppose \(M_\lambda\cong M_\mu\). The construction depends only on the kernel of \(\lambda\). Indeed, if \(\mu=c\lambda\) for some \(c\in k^\times\), then \(\ker(\mu)=\ker(\lambda)\), and hence \(K_\mu=K_\lambda\) and \(M_\mu=M_\lambda\). Conversely, since \(P_i\to M_\lambda\) and \(P_i\to M_\mu\) are projective covers, an isomorphism \(M_\lambda\cong M_\mu\) lifts to an automorphism \(\varphi\in\operatorname{Aut}_\Lambda(P_i)\) satisfying \(\varphi(K_\lambda)=K_\mu\). By Theorem~\ref{thm:top-layer-factorization}, the induced action on \(V_{ij}\) factors through \(D_i^\times\). Since \(\Lambda\) is split, \(D_i^\times\cong k^\times\), and this action is scalar multiplication. Hence it fixes every \(k\)-subspace of \(V_{ij}\). Since \(\varphi(K_\lambda)=K_\mu\), comparison of the \(j\)-components gives \(\ker(\lambda)=\ker(\mu)\). Therefore \(\lambda\) and \(\mu\) differ by a nonzero scalar. It follows that the isomorphism classes in the intrinsic top-layer ambiguity fibre are naturally parametrised by \(\mathbb P_k(V_{ij}^*)\). Therefore \(\mathcal F^{<n,\mathrm{top}}_{ij}\cong\mathbb P_k(V_{ij}^*)\).
\end{proof}

The theorem gives the projective description of rank-one terminal ambiguity in the split case. For a general finite-dimensional algebra, the left residue division algebra \(D_j\) imposes additional restrictions on the admissible terminal submodules. The general nonsplit classification will be developed in Section~6 using the full bimodule structure of \(V_{ij}\).

\section{Projectivization over Division Algebras}

For a finite-dimensional right module \(V\) over a division algebra \(D\), consider the quotient
\[
\mathcal P_D(V)
=
\mathbb P_k(V^*)/D^\times.
\]
This quotient admits an intrinsic projective interpretation over \(D\). We develop that projectivization here as an independent geometric construction. Its relationship with terminal ambiguity in the general nonsplit setting will be clarified by the full bimodule classification in Section~6.

\begin{theorem}\label{thm:division-projectivization}
Let \(D\) be a finite-dimensional division algebra over \(k\), and let \(V\) be
a finite-dimensional right \(D\)-module. Set $D^*=\operatorname{Hom}_k(D,k)$, viewed as a \(D\)-bimodule by
$(a\cdot \eta)(x)=\eta(xa)$, $(\eta\cdot a)(x)=\eta(ax)$, for \(a,x\in D\) and \(\eta\in D^*\). Then there is a natural bijection
\[
\mathbb P_k(V^*)/D^\times
\cong
\mathbb P_D\!\left(\operatorname{Hom}_D(V,D^*)\right),
\]
where \(\mathbb P_D(-)\) denotes the set of one-dimensional left
\(D\)-submodules.
\end{theorem}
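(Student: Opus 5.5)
The plan is to reduce the statement to an explicit isomorphism of $k$-vector spaces
\[
\Theta:\operatorname{Hom}_D(V,D^*)\xrightarrow{\ \sim\ }V^*=\operatorname{Hom}_k(V,k),
\]
chosen so that the left $D$-module structure on the source becomes the natural $D$-action on $V^*$ coming from the right $D$-structure of $V$. Once this is done, both sides of the claimed bijection describe the same object. On the left we take $k$-lines modulo $D^\times$; on the right we take $D$-lines.

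First I fix the action that the statement leaves implicit. For $a\in D$ and $\lambda\in V^*$, set $(a\cdot\lambda)(v)=\lambda(va)$. A one-line check gives $(a\cdot(b\cdot\lambda))(v)=\lambda(vab)=((ab)\cdot\lambda)(v)$, so this is a left $D$-module structure on $V^*$. Since $k$ is central in $D$, the elements $c\in k\subseteq D$ act as ordinary scalars. Hence $D^\times$ permutes the $k$-lines of $V^*$, and $\mathbb P_k(V^*)/D^\times$ makes sense. This is the action relevant to the paper, because it is the $D_i^\times$-action on $V_{ij}$ from Theorem~\ref{thm:top-layer-factorization}, transported to functionals.

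Next I define $\Theta(\Phi)(v)=\Phi(v)(1)$, with inverse $\lambda\mapsto\Phi_\lambda$, where $\Phi_\lambda(v)(x)=\lambda(vx)$. I then verify three things.
\begin{enumerate}[label=(\roman*)]
\item $\Phi_\lambda$ is right $D$-linear: $\Phi_\lambda(va)(x)=\lambda(vax)=\Phi_\lambda(v)(ax)=(\Phi_\lambda(v)\cdot a)(x)$, using the stated right action on $D^*$.
\item The two maps are mutually inverse. For a right $D$-linear $\Phi$ we have $\Phi(v)(x)=\Phi(vx)(1)$, because $\Phi(vx)=\Phi(v)\cdot x$, and this is exactly $\Phi_{\Theta(\Phi)}(v)(x)$.
\item $\Theta$ is $D$-linear. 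The left structure on $\operatorname{Hom}_D(V,D^*)$ is $(a\Phi)(v)=a\cdot\Phi(v)$, which is right $D$-linear because $D^*$ is a bimodule. Then $\Theta(a\Phi)(v)=(a\cdot\Phi(v))(1)=\Phi(v)(a)=\Phi(va)(1)=(a\cdot\Theta(\Phi))(v)$.
\end{enumerate}
Everything here is natural in $V$.

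With $\Theta$ in hand, I define the bijection $D^\times\cdot[\lambda]\mapsto D\Phi_\lambda$, or equivalently $D\lambda$ after transporting along $\Theta$. This is well defined: replacing $\lambda$ by $c\lambda$ with $c\in k^\times$, or by $a\lambda$ with $a\in D^\times$, does not change the $D$-span. It is surjective, since every one-dimensional left $D$-submodule has the form $D\lambda$ with $\lambda\neq0$. For injectivity, suppose $D\lambda=D\mu$. Then $\mu=a\lambda$ for some $a\in D^\times$, so $[\mu]=a\cdot[\lambda]$ lies in the same orbit.

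The only real obstacle is bookkeeping of conventions. One must check that the given bimodule structure on $D^*$ makes $\Phi_\lambda$ right $D$-linear and makes $\Theta$ left $D$-linear, rather than $D^{\mathrm{op}}$-linear. The computations in steps (i) and (iii) are exactly where this is settled. If the sides came out reversed, I would switch to the contragredient action $\lambda\mapsto\lambda(\,\cdot\,a^{-1})$; this has the same orbits, so the bijection would be unaffected.
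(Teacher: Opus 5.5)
Your proposal is correct and is essentially the paper's proof. Both use the same $k$-isomorphism $\lambda\mapsto\Phi_\lambda$, $\Phi_\lambda(v)(x)=\lambda(vx)$, from $V^*$ to $\operatorname{Hom}_D(V,D^*)$, with inverse given by evaluation at $1$, and then observe that $D^\times$-orbits of $k$-lines are exactly the left $D$-lines. The only difference is cosmetic: you use the genuine left module structure $(a\lambda)(v)=\lambda(va)$, so $\Theta$ is honestly $D$-linear, whereas the paper uses $\lambda\mapsto\lambda(\,\cdot\,a^{-1})$ and matches it with left multiplication by $a^{-1}$. As you note, the two actions have the same orbits.
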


The proof proceeds in two stages. First we identify \(V^*\) with a naturally associated left
\(D\)-module. We then show that the quotient by \(D^\times\) becomes ordinary
projectivization over \(D\).

\begin{proof}
Define \(\Phi:V^*\longrightarrow \operatorname{Hom}_D(V,D^*)\) by
\(\Phi(\lambda)(v)(d)=\lambda(vd)\), for \(\lambda\in V^*\), \(v\in V\), and
\(d\in D\). We first show that \(\Phi(\lambda)\) is right \(D\)-linear. For \(a\in D\),
\(\Phi(\lambda)(va)(d)=\lambda(vad)\), while

\[(\Phi(\lambda)(v)\cdot a)(d)=\Phi(\lambda)(v)(ad)=\lambda(vad).\]
 Hence
\(\Phi(\lambda)(va)=\Phi(\lambda)(v)\cdot a\). Thus
\(\Phi(\lambda)\in \operatorname{Hom}_D(V,D^*)\) and the map \(\Phi\) is \(k\)-linear. To prove injectivity, suppose \(\Phi(\lambda)=0\). Then
\(\Phi(\lambda)(v)(1)=0\) for every \(v\in V\). Hence \(\lambda(v)=0\) for every \(v\in V\), so \(\lambda=0\). To prove surjectivity, let \(f\in \operatorname{Hom}_D(V,D^*)\). Define
\(\lambda_f(v)=f(v)(1)\). Then
\(\Phi(\lambda_f)(v)(d)=\lambda_f(vd)=f(vd)(1)\). Since \(f\) is right
\(D\)-linear, \(f(vd)=f(v)\cdot d\). Therefore
\[f(vd)(1)=(f(v)\cdot d)(1)=f(v)(d).\]
 Thus \(\Phi(\lambda_f)(v)=f(v)\) for every \(v\in V\), and hence \(\Phi(\lambda_f)=f\). Therefore \(V^*\cong\operatorname{Hom}_D(V,D^*)\) as \(k\)-vector spaces.

Set \(W=\operatorname{Hom}_D(V,D^*)\). The left action of \(D\) on \(D^*\) induces a left \(D\)-module structure on \(W\) by \((a\cdot f)(v)=a\cdot f(v)\). The action of \(D^\times\) on \(V^*\) is
\((a\cdot\lambda)(v)=\lambda(va^{-1})\). Under the isomorphism \(\Phi\), this action corresponds to left
multiplication by \(a^{-1}\) on \(W\). Indeed,
\[\Phi(a\cdot\lambda)(v)(d)=(a\cdot\lambda)(vd)=\lambda(vda^{-1}),\]
while
\[(a^{-1}\cdot\Phi(\lambda))(v)(d)=\Phi(\lambda)(v)(da^{-1})
=\lambda(vda^{-1}).\]
Hence \(\Phi(a\cdot\lambda)=a^{-1}\cdot\Phi(\lambda)\). Therefore
\(\mathbb P_k(V^*)/D^\times\cong\mathbb P_k(W)/D^\times\). Since the \(D^\times\)-orbits on nonzero elements of \(W\) are precisely the
nonzero elements lying in the same one-dimensional left \(D\)-submodule,
we have $\mathbb P_k(W)/D^\times \cong \mathbb P_D(W)$. Substituting \(W=\operatorname{Hom}_D(V,D^*)\) gives
\[
\mathbb P_k(V^*)/D^\times
\cong
\mathbb P_D\!\left(
\operatorname{Hom}_D(V,D^*)
\right).
\]
\end{proof}

The preceding theorem identifies the ambiguity quotient with a genuine
projective geometry over \(D\). The remaining question is to determine
which projective geometry it is. Since every finite-dimensional module over a division algebra is free,
one expects the resulting projective geometry to depend only on the
\(D\)-rank of \(V\). The next theorem shows that this is indeed the
case.

\begin{theorem}\label{thm:nonsplit-projective-space}
Let \(D\) be a finite-dimensional division algebra over \(k\), and let
\(V\) be a finite-dimensional right \(D\)-module. Let $r=\dim_D V$ and
let $D^*=\operatorname{Hom}_k(D,k)$, viewed as a \(D\)-bimodule by $(a\cdot \eta)(x)=\eta(xa)$, $(\eta\cdot a)(x)=\eta(ax)$, for \(a,x\in D\) and \(\eta\in D^*\). Then there is a natural identification
\[
\mathcal P_D(V)
=
\mathbb P_k(V^*)/D^\times
\cong
\mathbb P_D\!\left(\operatorname{Hom}_D(V,D^*)\right),
\]
where \(\mathbb P_D(-)\) denotes the set of one-dimensional left
\(D\)-submodules. In particular, since \(V\cong D^r\), there is a noncanonical identification
\[
\mathcal P_D(V)
\cong
\mathbb P_D^{\,r-1}.
\]
\end{theorem}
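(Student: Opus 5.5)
The first identification in the statement is exactly Theorem~\ref{thm:division-projectivization}, applied with the definition \(\mathcal P_D(V)=\mathbb P_k(V^*)/D^\times\). So the only new content is the noncanonical identification with \(\mathbb P_D^{\,r-1}\). I would obtain it by computing the left \(D\)-module \(W=\operatorname{Hom}_D(V,D^*)\) up to isomorphism, and then observing that \(\mathbb P_D(-)\) of a free left \(D\)-module of rank \(r\) is, by definition, \(\mathbb P_D^{\,r-1}\).

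\textbf{Step 1: pass to coordinates.} Every finite-dimensional right \(D\)-module is free, so choose a basis to get a right \(D\)-isomorphism \(V\cong D^r\), with \(r=\dim_D V\). Since \(\operatorname{Hom}_D(-,D^*)\) is additive in the first variable, this gives an isomorphism of left \(D\)-modules
\[
W\cong\operatorname{Hom}_D(D,D^*)^{\oplus r}.
\]
Here the left structure comes from the left action on \(D^*\), as in the proof of Theorem~\ref{thm:division-projectivization}. The identification depends on the choice of basis, which is why the final statement is noncanonical.

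\textbf{Step 2: identify one summand.} The evaluation map \(f\mapsto f(1)\) gives \(\operatorname{Hom}_D(D_D,D^*)\cong D^*\) as left \(D\)-modules, since a right \(D\)-linear map out of \(D_D\) is determined by the image of \(1\). Compatibility with the left action is immediate from \((a\cdot f)(1)=a\cdot f(1)\).

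\textbf{Step 3: show \(D^*\) is free of rank one as a left \(D\)-module.} I expect this to be the only genuine point. It is a division-ring module, hence free, so it suffices to compute its rank:
\[
\dim_k D^*=\dim_k D
\]
forces the rank to be one. Here \(\dim_k D<\infty\) is essential, and that is where finite-dimensionality of \(D\) over \(k\) is used. If one wants an explicit generator, any nonzero \(\eta\in D^*\) works, since every nonzero element of a rank-one module over a division algebra generates it. One should also check that the bimodule conventions \((a\cdot\eta)(x)=\eta(xa)\) really define a left action: \((a\cdot(b\cdot\eta))(x)=(b\cdot\eta)(xa)=\eta(xab)=((ab)\cdot\eta)(x)\), which holds.

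\textbf{Step 4: conclude.} Combining Steps 1--3 gives \(W\cong {}_DD^{\,r}\), and hence
\[
\mathcal P_D(V)\cong\mathbb P_D(W)\cong\mathbb P_D(D^r)=\mathbb P_D^{\,r-1}.
\]
The isomorphism \(\mathbb P_D(W)\cong\mathbb P_D(D^r)\) holds because any left \(D\)-isomorphism \(W\cong D^r\) carries one-dimensional left submodules bijectively onto one-dimensional left submodules.
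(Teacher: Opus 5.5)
Your proposal is correct and follows essentially the same route as the paper. Both arguments invoke Theorem~\ref{thm:division-projectivization} for the natural identification, then use \(V\cong D^r\) to get \(\operatorname{Hom}_D(V,D^*)\cong(D^*)^r\) as left \(D\)-modules, and use the count \(\dim_kD^*=\dim_kD\) to see that \(D^*\) is free of rank one on the left. Your evaluation isomorphism \(\operatorname{Hom}_D(D_D,D^*)\cong D^*\) and your check that the left action on \(D^*\) is a genuine action just make explicit steps the paper asserts without comment.
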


The first theorem already identifies
\(\mathcal P_D(V)\) with a projective space over \(D\).
The remaining task is therefore to compute the corresponding
\(D\)-dimension.

\begin{proof}
By Theorem~\ref{thm:division-projectivization},
\[
\mathcal P_D(V)
\cong
\mathbb P_D\!\left(
\operatorname{Hom}_D(V,D^*)
\right).
\]

It therefore suffices to determine the left \(D\)-module structure of
\(\operatorname{Hom}_D(V,D^*)\). Since \(V\) is a finite-dimensional right module over the division algebra
\(D\), it is free. Hence \(V\cong D^r\) as a right \(D\)-module, where
\(r=\dim_D V\). There is a natural \(k\)-linear isomorphism
\(\Phi:V^*\longrightarrow \operatorname{Hom}_D(V,D^*)\) defined by
\(\Phi(\lambda)(v)(d)=\lambda(vd)\), for \(\lambda\in V^*\), \(v\in V\), and
\(d\in D\). We first check that \(\Phi(\lambda)\) is right \(D\)-linear. For \(a\in D\),
\(\Phi(\lambda)(va)(d)=\lambda(vad)\). On the other hand,
\((\Phi(\lambda)(v)\cdot a)(d)=\Phi(\lambda)(v)(ad)=\lambda(vad)\). Hence
\(\Phi(\lambda)(va)=\Phi(\lambda)(v)\cdot a\), so
\(\Phi(\lambda)\in \operatorname{Hom}_D(V,D^*)\).

The map \(\Phi\) is injective because
\(\Phi(\lambda)(v)(1)=\lambda(v)\). It is also surjective: if
\(f\in \operatorname{Hom}_D(V,D^*)\), define
\(\lambda_f(v)=f(v)(1)\). Then
\(\Phi(\lambda_f)(v)(d)=\lambda_f(vd)=f(vd)(1)\). Since \(f\) is right
\(D\)-linear, \(f(vd)=f(v)\cdot d\). Therefore
\(f(vd)(1)=(f(v)\cdot d)(1)=f(v)(d)\). Thus
\(\Phi(\lambda_f)=f\), and \(\Phi\) is an isomorphism.

Now set \(W=\operatorname{Hom}_D(V,D^*)\). The left action of \(D\) on \(D^*\)
induces a left \(D\)-module structure on \(W\) by
\((a\cdot f)(v)=a\cdot f(v)\). Under the isomorphism \(\Phi\), the induced \(D^\times\)-action on
\(\mathbb P_k(V^*)\) corresponds to multiplication by nonzero left
\(D\)-scalars on \(W\). Hence
\(\mathbb P_k(V^*)/D^\times\cong\mathbb P_D(W)\). It remains to identify \(W\). Since \(V\cong D^r\), we have
\[
W=\operatorname{Hom}_D(V,D^*)\cong\operatorname{Hom}_D(D^r,D^*)
\cong(D^*)^r\] 
as left \(D\)-modules. As a left \(D\)-module, \(D^*\) is one-dimensional. Indeed,
\(\dim_k D^*=\dim_k D\), and every finite-dimensional left \(D\)-module is
free. Therefore \(D^*\cong D\) as a left \(D\)-module, though not canonically. Consequently, \(W\cong D^r\) as a left \(D\)-module. Hence
\(\mathbb P_D(W)\cong\mathbb P_D(D^r)=\mathbb P_D^{\,r-1}\).

Combining the identifications gives
\(\mathcal P_D(V)=\mathbb P_k(V^*)/D^\times
\cong\mathbb P_D\!\left(\operatorname{Hom}_D(V,D^*)\right)
\cong\mathbb P_D^{\,r-1}\). Thus the ambiguity quotient of a right \(D\)-module of rank \(r\) is,
noncanonically, projective \((r-1)\)-space over \(D\).
\end{proof}

Thus, for a finite-dimensional right \(D\)-module \(V\), the quotient
\[
\mathcal P_D(V)=\mathbb P_k(V^*)/D^\times
\]
is intrinsically a projective space over \(D\), noncanonically determined by the \(D\)-rank of \(V\). The application of this construction to terminal ambiguity requires additional compatibility with the left residue-division-algebra action, which will be incorporated in Section~6.

\section{Complete Classification of Terminal Ambiguity Fibres}

We now complete the rank-one projective classification in the split case.
Here the residue division algebras coincide with the ground field, so the
terminal hyperplanes are ordinary \(k\)-hyperplanes and the ambiguity fibre
is an ordinary projective space. The general nonsplit situation requires
the full bimodule structure of the terminal component and will be treated
in Section~6.

\begin{theorem}\label{thm:complete-classification}
Let \(\Lambda\) be a finite-dimensional basic split \(k\)-algebra with radical \(J\) satisfying \(J^n=0\). Let \(e_i,e_j\) be primitive idempotents and set \(V_{ij}=e_jJ^{n-1}e_i\). Assume \(V_{ij}\neq0\), and set \(m_{ij}=\dim_kV_{ij}\). Then there is a natural bijection \(\mathcal F^{<n,\mathrm{top}}_{ij}\cong\mathbb P_k(V_{ij}^*)\). Consequently, after choosing a basis of \(V_{ij}\), there is a noncanonical bijection \(\mathcal F^{<n,\mathrm{top}}_{ij}\cong\mathbb P_k^{\,m_{ij}-1}\).
\end{theorem}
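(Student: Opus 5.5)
The first bijection \(\mathcal F^{<n,\mathrm{top}}_{ij}\cong\mathbb P_k(V_{ij}^*)\) is exactly the content of Theorem~\ref{thm:top-layer-ambiguity}, so I would invoke that result directly rather than reprove it. I would still recall the structure of the argument, since it fixes what ``natural'' means. To each nonzero \(\lambda\in V_{ij}^*\) one attaches the submodule \(K_\lambda=W_j\oplus\ker(\lambda)\subseteq J^{n-1}P_i\) and the quotient \(M_\lambda=P_i/K_\lambda\). All these quotients have the same proper truncation tower \(P_i/J^dP_i\) for \(1\le d<n\).

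For isomorphism, I would lift any isomorphism \(M_\lambda\cong M_\mu\) to an automorphism of \(P_i\) via projective covers. By Theorem~\ref{thm:top-layer-factorization}, that automorphism acts on \(V_{ij}\) through \(D_i^\times=k^\times\), that is, by scalars. Such an action preserves every hyperplane, so \(\ker\lambda=\ker\mu\). The assignment \([\lambda]\mapsto[M_\lambda]\) is therefore well defined and injective, and it is intrinsic: it involves no choices beyond the data \(\Lambda,e_i,e_j\).

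For the second statement, I would choose a \(k\)-basis \(v_1,\dots,v_{m_{ij}}\) of \(V_{ij}\), where \(m_{ij}=\dim_kV_{ij}\ge1\) since \(V_{ij}\ne0\). Let \(v_1^*,\dots,v_{m_{ij}}^*\) be the dual basis. This gives a linear isomorphism \(V_{ij}^*\cong k^{m_{ij}}\), \(\lambda\mapsto(\lambda(v_1),\dots,\lambda(v_{m_{ij}}))\). It carries nonzero vectors to nonzero vectors and commutes with the scalar action of \(k^\times\). It therefore descends to a bijection \(\mathbb P_k(V_{ij}^*)\cong\mathbb P_k(k^{m_{ij}})=\mathbb P_k^{\,m_{ij}-1}\). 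The bijection depends on the chosen basis, since a change of basis alters it by an element of \(\mathrm{PGL}_{m_{ij}}(k)\); this is why it is only noncanonical. Composing with the first bijection gives the corollary.

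The only delicate point is the isomorphism step inside Theorem~\ref{thm:top-layer-ambiguity}. There one must check that an automorphism \(\varphi\) of \(P_i\) with \(\varphi(K_\lambda)=K_\mu\) actually restricts to \(e_jJ^{n-1}P_i=V_{ij}\) and respects the decomposition. This holds for two reasons. First, \(\varphi\) is right multiplication by a unit of \(e_i\Lambda e_i\), so it commutes with left multiplication by \(e_j\). Second, it therefore preserves both \(V_{ij}\) and \(W_j\), so taking \(j\)-components of \(\varphi(K_\lambda)=K_\mu\) gives \(\varphi(\ker\lambda)=\ker\mu\). Once this is in place, the rest is routine linear algebra.
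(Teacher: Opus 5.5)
Your proposal is correct and follows the paper's proof. Both get the natural bijection directly from Theorem~\ref{thm:top-layer-ambiguity} and then use a choice of basis of \(V_{ij}\) to identify \(\mathbb P_k(V_{ij}^*)\) with \(\mathbb P_k^{\,m_{ij}-1}\). Your extra check, that right multiplication by a unit of \(e_i\Lambda e_i\) preserves \(V_{ij}\) and \(W_j\), justifies the comparison of \(j\)-components and is the same observation the paper makes explicitly in the proof of Theorem~\ref{thm:grassmannian-classification}.
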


\begin{proof}
By Theorem~\ref{thm:top-layer-ambiguity}, \(\mathcal F^{<n,\mathrm{top}}_{ij}\cong\mathbb P_k(V_{ij}^*)\). Since \(\dim_kV_{ij}=m_{ij}\), choosing a basis of \(V_{ij}\) gives a noncanonical identification \(\mathbb P_k(V_{ij}^*)\cong\mathbb P_k^{\,m_{ij}-1}\).
\end{proof}

Thus, in the split case, the rank-one terminal ambiguity fibre is completely
determined by the multiplicity \(m_{ij}=\dim_kV_{ij}\).

The split classification shows that rank-one terminal ambiguity is concentrated in
the final radical layer and is measured entirely by the multiplicity of the
component \(e_jJ^{n-1}e_i\). In particular, whenever \(\Lambda\) is split, the
geometry of the rank-one ambiguity fibre is completely determined by the single integer
\(m_{ij}=\dim_k(e_jJ^{n-1}e_i)\). Thus the family of rank-one terminal ambiguity fibres
ranges from a single point when \(m_{ij}=1\) to higher-dimensional projective
spaces as the multiplicity increases.

\section{Grassmannian Classification of Terminal Ambiguity}

This section extends terminal ambiguity from rank-one terminal quotients
to arbitrary terminal rank. The first subsection defines the higher-rank
ambiguity fibres and classifies the terminal completions that remain
indistinguishable under every proper radical truncation. The second
specializes to the split case, where the resulting ambiguity can be
measured geometrically by dimension and, over finite fields, by exact
enumeration.

\subsection{Higher-rank terminal ambiguity and classification}

We enlarge the terminal ambiguity problem from one-dimensional terminal
quotients to quotients of arbitrary rank. Fix primitive idempotents
\(e_i,e_j\) and suppose $V_{ij}=e_jJ^{n-1}e_i\neq0$. Write $\ell_{ij}=\dim_{D_j}V_{ij}$,
\[
W_j=\bigoplus_{h\neq j}e_hJ^{n-1}P_i.
\]
For \(1\leq s\leq\ell_{ij}\), let $\operatorname{Gr}_{D_j}(\ell_{ij}-s,V_{ij})$ denote the Grassmannian of left \(D_j\)-submodules \(U\subseteq V_{ij}\) satisfying
$\dim_{D_j}(V_{ij}/U)=s$. For such \(U\), set $K_U=W_j\oplus U$, $M_U=P_i/K_U$.

\begin{definition}
The rank-\(s\) terminal ambiguity fibre associated to \(V_{ij}\) is
\[
\mathcal F^{<n,\mathrm{top},s}_{ij}
=
\left\{
M_U:
U\in
\operatorname{Gr}_{D_j}(\ell_{ij}-s,V_{ij})
\right\}/\cong.
\]
\end{definition}

The first point is that varying \(U\) changes only the terminal
completion: it does not alter any proper radical truncation.

\begin{proposition}\label{prop:higher-rank-truncation}
For every $U\in \operatorname{Gr}_{D_j}(\ell_{ij}-s,V_{ij})$, the subspace $K_U=W_j\oplus U$ is a left \(\Lambda\)-submodule of \(P_i\). Moreover, for every
\(1\leq d<n\), $M_U/J^dM_U \cong P_i/J^dP_i$. Thus all modules \(M_U\) have the same complete tower of proper radical truncations.
\end{proposition}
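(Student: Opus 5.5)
The plan is to follow the rank-one argument in the proof of Theorem~\ref{thm:top-layer-ambiguity} and to replace the kernel of a functional by an arbitrary admissible left \(D_j\)-submodule \(U\). The statement has two parts. First, \(K_U\) is a \(\Lambda\)-submodule of \(P_i\). Second, the proper truncations of \(M_U\) do not depend on \(U\).

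\textbf{Submodule property.} Since \(J\cdot J^{n-1}P_i\subseteq J^nP_i=0\), the module \(J^{n-1}P_i\) is annihilated by \(J\). It is therefore a left \(\Lambda/J\)-module, and \(\Lambda/J\cong\prod_h D_h\) because \(\Lambda\) is basic. It decomposes as
\[
J^{n-1}P_i=\bigoplus_h e_hJ^{n-1}P_i=\bigoplus_h e_hJ^{n-1}e_i .
\]
Here \(e_jJ^{n-1}P_i=e_jJ^{n-1}\Lambda e_i\), and this equals \(V_{ij}\). Indeed, \(J^{n-1}\Lambda e_i\subseteq J^{n-1}e_i\), and \(J^{n-1}e_i\subseteq J^{n-1}\Lambda e_i\) since \(e_i\in\Lambda e_i\).

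Now let \(\lambda\in\Lambda\) and \(x=w+u\in K_U\), with \(w\in W_j\) and \(u\in U\). Write \(\lambda=\sum_{h,h'}e_h\lambda e_{h'}\). Any term with a radical component acts by zero on \(J^{n-1}P_i\). Hence \(\lambda x\) depends only on the image of \(\lambda\) in \(\Lambda/J\), and that image acts diagonally on the summands \(e_hJ^{n-1}P_i\). In particular \(e_j\lambda e_j\) acts on \(U\) through \(D_j\), so it preserves \(U\) by the \(D_j\)-admissibility of \(U\). The other summands are preserved automatically, so \(\lambda x\in W_j\oplus U\).

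The one delicate point is the claim that \(e_h\lambda e_{h'}\) with \(h\neq h'\) lies in \(J\). This holds because \(\Lambda\) is basic: \(e_h\Lambda e_{h'}\subseteq J\) whenever \(h\ne h'\), since \(\Lambda/J\) is a product of division algebras. I will state this explicitly. I expect it to be the main point requiring care. It is exactly where left \(D_j\)-linearity, rather than mere \(k\)-linearity, is needed, and where the basic hypothesis enters.

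\textbf{Truncations.} For \(1\le d<n\) we have
\[
K_U\subseteq J^{n-1}P_i\subseteq J^dP_i .
\]
Hence \(J^dM_U=(J^dP_i+K_U)/K_U=J^dP_i/K_U\). The third isomorphism theorem then gives
\[
M_U/J^dM_U\cong (P_i/K_U)/(J^dP_i/K_U)\cong P_i/J^dP_i .
\]
This is independent of \(U\), which gives the final assertion. I would also note that \(K_U\subseteq JP_i\) for \(n\ge2\), so \(P_i\to M_U\) is a projective cover. This is not needed here, but it will be used later in the isomorphism classification.
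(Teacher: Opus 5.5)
Your proof is correct and follows the paper's own argument. The paper also notes that \(J\) kills \(J^{n-1}P_i\), so \(\Lambda\) acts through \(\Lambda/J\) and preserves both the \(D_j\)-submodule \(U\) and \(W_j\); it then uses \(K_U\subseteq J^{n-1}P_i\subseteq J^dP_i\) to get \(J^dM_U=J^dP_i/K_U\) and hence \(M_U/J^dM_U\cong P_i/J^dP_i\). Your explicit remark that \(e_h\Lambda e_{h'}\subseteq J\) for \(h\neq h'\), which follows from basicness, makes precise what the paper leaves implicit in the phrase ``the \(j\)-component \(D_j\) of \(\Lambda/J\)''.
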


\begin{proof}
Since \(JV_{ij}=0\), the left \(\Lambda\)-action on \(V_{ij}\) factors through the \(j\)-component \(D_j\) of \(\Lambda/J\). Hence every left \(D_j\)-submodule \(U\subseteq V_{ij}\) is a left \(\Lambda\)-submodule. Since
\[
W_j=
\bigoplus_{h\neq j}e_hJ^{n-1}P_i
\]
is a direct sum of \(\Lambda/J\)-isotypic components, it is also a left \(\Lambda\)-submodule. Hence $K_U=W_j\oplus U$ is a left \(\Lambda\)-submodule of \(P_i\). Since
$K_U\subseteq J^{n-1}P_i\subseteq JP_i$, the canonical surjection $P_i\twoheadrightarrow M_U$ is a projective cover. For \(1\leq d<n\), one has $K_U\subseteq J^{n-1}P_i\subseteq J^dP_i$,
and therefore $J^dM_U=J^dP_i/K_U$. Consequently,
\[
M_U/J^dM_U
\cong
(P_i/K_U)/(J^dP_i/K_U)
\cong
P_i/J^dP_i.
\]
\end{proof}

We now classify these terminal completions up to isomorphism. The two
sides of the terminal bimodule ${}_{D_j}V_{ij}{}_{D_i}$ play different roles: the left \(D_j\)-module structure determines the
admissible terminal submodules \(U\), while the right \(D_i\)-action
determines when two such submodules yield isomorphic quotient modules.
Since the two actions commute, \(D_i^\times\) acts by right
multiplication on $\operatorname{Gr}_{D_j}(\ell_{ij}-s,V_{ij})$. The ambiguity fibre is therefore naturally governed by the orbit
structure of this action.

\begin{theorem}\label{thm:grassmannian-classification}
Let \(\Lambda\) be a finite-dimensional basic algebra with radical \(J\) satisfying \(J^n=0\). Let \(e_i,e_j\) be primitive idempotents, and set $P_i=\Lambda e_i$, $D_h=e_h\Lambda e_h/e_hJe_h$, $V_{ij}=e_jJ^{n-1}e_i$. Assume \(V_{ij}\neq0\), and write $\ell_{ij}=\dim_{D_j}V_{ij}$. Then, for every \(1\leq s\leq\ell_{ij}\), there is a natural bijection
\[
\mathcal F^{<n,\mathrm{top},s}_{ij}
\cong
\operatorname{Gr}_{D_j}
(\ell_{ij}-s,V_{ij})/D_i^\times,
\]
where \(D_i^\times\) acts by right multiplication. More precisely, if $U,U' \in \operatorname{Gr}_{D_j} (\ell_{ij}-s,V_{ij})$, then $M_U\cong M_{U'}$ if and only if $U'=Ua$
for some \(a\in D_i^\times\).
\end{theorem}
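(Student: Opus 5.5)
The plan is to compare the two projective covers \(P_i\twoheadrightarrow M_U\) and \(P_i\twoheadrightarrow M_{U'}\) supplied by Proposition~\ref{prop:higher-rank-truncation}. The argument then reduces to the right action of \((e_i\Lambda e_i)^\times\) on \(J^{n-1}P_i\), and Theorem~\ref{thm:top-layer-factorization} shows that only \(D_i^\times\) survives on the terminal component.

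\emph{Step 1: the action is well defined.} First I would check that right multiplication by \(a\in D_i^\times\) maps \(\operatorname{Gr}_{D_j}(\ell_{ij}-s,V_{ij})\) to itself.
\begin{itemize}
\item By Theorem~\ref{thm:top-layer-factorization}, the right action of \(e_i\Lambda e_i\) on \(V_{ij}\) factors through \(D_i\). The left action factors through \(D_j\).
\item The two actions commute by associativity in \(\Lambda\). Hence \(Ua\) is again a left \(D_j\)-submodule.
\item Right multiplication by \(a\) is a bijective left \(D_j\)-linear map \(V_{ij}\to V_{ij}\). It therefore induces \(V_{ij}/U\cong V_{ij}/Ua\), so \(\dim_{D_j}(V_{ij}/Ua)=s\).
\end{itemize}
So \(D_i^\times\) acts on the Grassmannian, and the quotient in the statement makes sense.

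\emph{Step 2: sufficiency.} Given \(a\in D_i^\times\), I would choose any lift \(u\in e_i\Lambda e_i\). Since \(e_i\Lambda e_i\) is local with radical \(e_iJe_i\), and the residue of \(u\) is nonzero, \(u\) is a unit. Right multiplication \(\rho_u:P_i=\Lambda e_i\to P_i\), \(x\mapsto xu\), is then a left \(\Lambda\)-automorphism. It has the following effect on \(K_U\).
\begin{itemize}
\item It preserves each summand \(e_hJ^{n-1}e_i\), since \(e_hJ^{n-1}e_iu\subseteq e_hJ^{n-1}e_i\). Being bijective, it maps \(W_j\) onto \(W_j\).
\item On \(V_{ij}\) it acts as \(a\), so it carries \(U\) onto \(Ua\).
\end{itemize}
Thus \(\rho_u(K_U)=K_{Ua}\), and \(\rho_u\) induces \(M_U\cong M_{Ua}\).

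\emph{Step 3: necessity.} Suppose \(f:M_U\to M_{U'}\) is an isomorphism, and write \(\pi_U,\pi_{U'}\) for the two quotient maps.
\begin{itemize}
\item By projectivity of \(P_i\), the composite \(f\circ\pi_U\) lifts to an endomorphism \(\varphi\) of \(P_i\) with \(\pi_{U'}\varphi=f\pi_U\).
\item Since \(\pi_{U'}\) is a projective cover (its kernel lies in \(JP_i\)) and \(\pi_{U'}\varphi\) is surjective, Nakayama's lemma shows \(\varphi\) is surjective. Finite dimensionality then makes \(\varphi\) an automorphism.
\item Comparing kernels gives \(\varphi(K_U)=K_{U'}\).
\item Every endomorphism of \(\Lambda e_i\) is \(x\mapsto xu\) for \(u=\varphi(e_i)\in e_i\Lambda e_i\), and \(u\) is a unit because \(\varphi\) is invertible.
\end{itemize}
As in Step 2, \(\varphi\) preserves each isotypic component \(e_hJ^{n-1}P_i\). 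Intersecting \(\varphi(K_U)=K_{U'}\) with the \(j\)-component therefore gives \(Uu=U'\). By Theorem~\ref{thm:top-layer-factorization}, \(Uu=Ua\), where \(a\) is the residue of \(u\) in \(D_i^\times\).

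Combining Steps 2 and 3, \(U\mapsto M_U\) descends to a bijection from the orbit set onto \(\mathcal F^{<n,\mathrm{top},s}_{ij}\). It is surjective by the definition of the fibre, and injective by Step 3.

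\emph{Main obstacle.} I expect the delicate step to be the projective-cover lifting in Step 3. Two points need care there:
\begin{itemize}
\item ensuring that the lifted map is an automorphism carrying \(K_U\) exactly onto \(K_{U'}\), rather than merely into it;
\item justifying that the component decomposition \(J^{n-1}P_i=\bigoplus_h e_hJ^{n-1}P_i\) is respected by \(\varphi\), so that the \(j\)-components can be compared.
\end{itemize}
For the second point I would use that \(\varphi\) is right multiplication by an element of \(e_i\Lambda e_i\), which commutes with left multiplication by each \(e_h\). I would also take care with the identification \(\operatorname{End}_\Lambda(\Lambda e_i)\cong(e_i\Lambda e_i)^{\mathrm{op}}\). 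This only affects the order of composition, not the orbit statement.
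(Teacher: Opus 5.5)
Your proposal is correct and follows essentially the same route as the paper. Both lift an isomorphism $M_U\cong M_{U'}$ to an automorphism of $P_i$ given by right multiplication by a unit of $e_i\Lambda e_i$, use Theorem~\ref{thm:top-layer-factorization} to reduce its effect on $V_{ij}$ to $D_i^\times$, and compare $j$-components of $K_U$ and $K_{U'}$; the converse lifts $a\in D_i^\times$ to a unit. Your explicit Nakayama argument and the check that $D_i^\times$ preserves the Grassmannian spell out steps the paper either cites as uniqueness of projective covers or states in the surrounding text.
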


\begin{proof}
Set
\[
W_j=
\bigoplus_{h\neq j}e_hJ^{n-1}P_i,
\qquad
K_U=W_j\oplus U,
\qquad
M_U=P_i/K_U.
\]
By Proposition~\ref{prop:higher-rank-truncation}, \(K_U\) is a \(\Lambda\)-submodule of \(P_i\), and $P_i\twoheadrightarrow M_U$ is a projective cover. Suppose first that
$M_U\cong M_{U'}$. Since both canonical maps $P_i\twoheadrightarrow M_U$, $P_i\twoheadrightarrow M_{U'}$ are projective covers, an isomorphism $M_U\xrightarrow{\sim}M_{U'}$
lifts, by uniqueness of projective covers, to an automorphism $\varphi\in\operatorname{Aut}_{\Lambda}(P_i)$ satisfying $\varphi(K_U)=K_{U'}$. There is a natural identification
$\operatorname{Aut}_{\Lambda}(P_i) \cong (e_i\Lambda e_i)^\times$, under which an automorphism acts on \(P_i=\Lambda e_i\) by right multiplication. By
Theorem~\ref{thm:top-layer-factorization}, the induced action on $V_{ij}=e_jJ^{n-1}e_i$ factors through $D_i^\times = (e_i\Lambda e_i/e_iJe_i)^\times$. Thus there exists \(a\in D_i^\times\) such that the restriction of \(\varphi\) to \(V_{ij}\) is right multiplication by \(a\). Right multiplication preserves each left vertex component $e_hJ^{n-1}P_i$. Indeed, if \(x\in e_hJ^{n-1}P_i\), then $e_h(xa)=(e_hx)a=xa$. Hence \(\varphi\) preserves
\[
W_j=
\bigoplus_{h\neq j}e_hJ^{n-1}P_i.
\]
It follows that $\varphi(K_U) = \varphi(W_j\oplus U) = W_j\oplus Ua$. Since $\varphi(K_U)=K_{U'}=W_j\oplus U'$, comparison of the \(j\)-components gives $U'=Ua$. Conversely, suppose
$U'=Ua$ for some \(a\in D_i^\times\). Since $e_i\Lambda e_i$ is a finite-dimensional local algebra with radical \(e_iJe_i\), the canonical homomorphism
$(e_i\Lambda e_i)^\times \longrightarrow D_i^\times$ is surjective. Choose $\widetilde a\in(e_i\Lambda e_i)^\times$ whose image in \(D_i^\times\) is \(a\). Right multiplication by \(\widetilde a\) defines an automorphism $\varphi_{\widetilde a} \in \operatorname{Aut}_{\Lambda}(P_i)$. On \(J^{n-1}P_i\), its action depends only on the image \(a\), since the subgroup $1+e_iJe_i$ acts trivially on the terminal layer. Consequently, $\varphi_{\widetilde a}(W_j)=W_j$ and $\varphi_{\widetilde a}(U)=Ua=U'$. Hence $\varphi_{\widetilde a}(K_U)=K_{U'}$. The automorphism therefore descends to an isomorphism $P_i/K_U \xrightarrow{\sim} P_i/K_{U'}$, that is, $M_U\cong M_{U'}$. It follows that the isomorphism classes of rank-\(s\) terminal completions are precisely the \(D_i^\times\)-orbits on $\operatorname{Gr}_{D_j} (\ell_{ij}-s,V_{ij})$, and hence $\mathcal F^{<n,\mathrm{top},s}_{ij} \cong \operatorname{Gr}_{D_j} (\ell_{ij}-s,V_{ij})/D_i^\times$.
\end{proof}

Theorem~\ref{thm:grassmannian-classification} gives an immediate
rigidity criterion. The rank-\(s\) terminal completion is unique up to
isomorphism precisely when \(D_i^\times\) acts transitively on
$\operatorname{Gr}_{D_j}(\ell_{ij}-s,V_{ij})$.
Failure of transitivity is therefore equivalent to the existence of
nonisomorphic rank-\(s\) terminal completions having the same complete
tower of proper radical truncations. Thus the orbit structure measures
the failure of the truncation data to determine a unique terminal
completion. The case \(s=1\) recovers the rank-one terminal ambiguity of
Section~5. Thus the higher-rank classification shows that the
information left unresolved by proper radical truncation is not
exhausted by the rank-one projective fibre.

\subsection{Measurement of terminal ambiguity in the split case}

The classification becomes particularly explicit when \(\Lambda\) is
split over \(k\). In this case the residue division algebras satisfy
\(D_h\cong k\), and the right unit action reduces to scalar
multiplication. The orbit spaces of the general classification
therefore become ordinary Grassmannians, allowing the terminal
ambiguity to be measured by familiar geometric invariants.

\begin{theorem}\label{thm:split-grassmannian-classification}
Let \(\Lambda\) be a finite-dimensional basic split \(k\)-algebra with radical \(J\) satisfying \(J^n=0\). Let \(e_i,e_j\) be primitive idempotents and suppose
$V_{ij}=e_jJ^{n-1}e_i\neq0$. Set $m_{ij}=\dim_kV_{ij}$. Then, for every $1\leq s\leq m_{ij}$, there is a natural bijection
\[
\mathcal F^{<n,\mathrm{top},s}_{ij}
\cong
\operatorname{Gr}_k(m_{ij}-s,V_{ij}).
\]
Thus the rank-\(s\) terminal ambiguity fibre is identified with the
ordinary Grassmannian of codimension-\(s\) \(k\)-subspaces of
\(V_{ij}\).
\end{theorem}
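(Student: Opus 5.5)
The plan is to deduce the statement directly from Theorem~\ref{thm:grassmannian-classification} by specializing the residue division algebras. Since \(\Lambda\) is basic and split over \(k\), each \(D_h=e_h\Lambda e_h/e_hJe_h\) is canonically identified with \(k\): the structure map \(k\to e_h\Lambda e_h\to D_h\) is an isomorphism. Consequently \(\ell_{ij}=\dim_{D_j}V_{ij}=\dim_kV_{ij}=m_{ij}\), so the range \(1\le s\le m_{ij}\) matches the range in Theorem~\ref{thm:grassmannian-classification}. Moreover, left \(D_j\)-submodules of \(V_{ij}\) are exactly \(k\)-subspaces, since left multiplication by an element of \(D_j\) is multiplication by a scalar. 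Thus \(\operatorname{Gr}_{D_j}(\ell_{ij}-s,V_{ij})=\operatorname{Gr}_k(m_{ij}-s,V_{ij})\) as sets.

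The next step is to show that the \(D_i^\times\)-action on this Grassmannian is trivial. Under \(D_i\cong k\), an element \(a\in D_i^\times\) is represented by \(c e_i+r\) with \(c\in k^\times\) and \(r\in e_iJe_i\). By Theorem~\ref{thm:top-layer-factorization}, or directly since \(vr\in J^n=0\), right multiplication on \(V_{ij}\) is \(v\mapsto cv\). Because the bimodule structure on \(V_{ij}\) is \(k\)-central, this equals the scalar action. Scalar multiplication maps every \(k\)-subspace \(U\) to itself, so \(Ua=U\) for all \(a\). Hence the orbit space \(\operatorname{Gr}_k(m_{ij}-s,V_{ij})/D_i^\times\) is just \(\operatorname{Gr}_k(m_{ij}-s,V_{ij})\).

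Combining these gives \(\mathcal F^{<n,\mathrm{top},s}_{ij}\cong\operatorname{Gr}_k(m_{ij}-s,V_{ij})\) via \(U\mapsto[M_U]\). Naturality is inherited from Theorem~\ref{thm:grassmannian-classification}, and injectivity amounts to the statement that \(M_U\cong M_{U'}\) forces \(U'=Ua=U\).

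The only point needing care is the claim that \(k\) acts centrally on \(V_{ij}\), with the left and right scalar actions agreeing. This holds because \(\Lambda\) is a \(k\)-algebra, so \(k\cdot 1\) is central and \(V_{ij}\) is a \(k\)-subspace of \(\Lambda\). It also matters that the identification \(D_i\cong k\) is induced by the structure map rather than chosen arbitrarily. I expect this compatibility check to be the only real obstacle; the rest is bookkeeping.
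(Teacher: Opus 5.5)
Your proposal is correct and follows the paper's proof: you specialize Theorem~\ref{thm:grassmannian-classification}, identify left \(D_j\)-submodules of \(V_{ij}\) with \(k\)-subspaces, and observe that \(D_i^\times\cong k^\times\) acts by scalars and hence trivially on \(\operatorname{Gr}_k(m_{ij}-s,V_{ij})\). Your extra checks, namely that \(\ell_{ij}=m_{ij}\) and that lifting \(a\) to \(ce_i+r\) gives \(v\mapsto cv\) via the structure map, spell out points the paper leaves implicit.
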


\begin{proof}
By
Theorem~\ref{thm:grassmannian-classification},
\[
\mathcal F^{<n,\mathrm{top},s}_{ij}
\cong
\operatorname{Gr}_{D_j}
(m_{ij}-s,V_{ij})/D_i^\times.
\]
Since \(\Lambda\) is split, $D_j\cong k$ and $D_i\cong k$. Hence $\operatorname{Gr}_{D_j} (m_{ij}-s,V_{ij}) = \operatorname{Gr}_k (m_{ij}-s,V_{ij})$,
while $D_i^\times\cong k^\times$. The action of \(k^\times\) on \(V_{ij}\) is scalar multiplication. Every \(k\)-subspace of \(V_{ij}\) is fixed by scalar multiplication,
so the induced action of \(k^\times\) on $\operatorname{Gr}_k(m_{ij}-s,V_{ij})$ is trivial. Consequently,
$\operatorname{Gr}_k(m_{ij}-s,V_{ij})/k^\times = \operatorname{Gr}_k(m_{ij}-s,V_{ij})$,
and therefore
\[
\mathcal F^{<n,\mathrm{top},s}_{ij}
\cong
\operatorname{Gr}_k(m_{ij}-s,V_{ij}).
\]
\end{proof}

\begin{corollary}\label{cor:split-ambiguity-dimension}
Under the hypotheses of
Theorem~\ref{thm:split-grassmannian-classification}, the rank-\(s\)
terminal ambiguity fibre carries the structure of a Grassmannian
variety of dimension $\dim \mathcal F^{<n,\mathrm{top},s}_{ij} = s(m_{ij}-s)$.
\end{corollary}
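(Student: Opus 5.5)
The plan is to transport the variety structure across the bijection of Theorem~\ref{thm:split-grassmannian-classification} and then compute the dimension of the resulting Grassmannian. That theorem gives a natural bijection
\[
\mathcal F^{<n,\mathrm{top},s}_{ij}\cong\operatorname{Gr}_k(m_{ij}-s,V_{ij}).
\]
Since \(\operatorname{Gr}_k(m_{ij}-s,V_{ij})\) is a smooth projective variety over \(k\), I will \emph{define} the geometric structure on the fibre by declaring this bijection an identification. This is legitimate because the bijection is canonical: \(U\mapsto[M_U]\) uses no choices. The corollary therefore reduces to computing \(\dim\operatorname{Gr}_k(m-s,V)\) for \(m=\dim_kV\).

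For the computation I will use the standard affine chart. I fix a subspace \(U_0\) of dimension \(m-s\) and a complement \(C\) of dimension \(s\). The subspaces \(U\) with \(U\cap C=0\) are exactly the graphs of linear maps \(U_0\to C\). This gives an open chart isomorphic to
\[
\operatorname{Hom}_k(U_0,C)\cong\mathbb A^{(m-s)s}.
\]
These charts cover the Grassmannian as \(C\) ranges over the coordinate complements, and the transition maps are regular. Hence the Grassmannian is irreducible of dimension \(s(m-s)\).

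A second route gives the same number. \(\mathrm{GL}(V)\) acts transitively on the Grassmannian, and the stabilizer is a parabolic subgroup of dimension \(m^2-s(m-s)\), so the orbit has dimension \(s(m-s)\). A third check uses duality: \(U\mapsto U^\perp\) identifies codimension-\(s\) subspaces of \(V\) with \(s\)-dimensional subspaces of \(V^*\), which matches the dual picture of Theorem~\ref{thm:top-layer-ambiguity} at \(s=1\).

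The only delicate point is interpretive rather than computational. The fibre \(\mathcal F^{<n,\mathrm{top},s}_{ij}\) is a priori only a set of isomorphism classes, so "dimension" must mean the dimension of the variety structure transported via the natural bijection. I will state this explicitly. I will also note that \(k^\times\) acts trivially, so no quotient singularities arise, and no orbit-space construction is needed. As a consistency check, the case \(s=1\) gives \(m_{ij}-1\), in agreement with Theorem~\ref{thm:complete-classification}.
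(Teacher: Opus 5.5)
Your proposal is correct and follows the paper's proof. The paper also identifies the fibre with $\operatorname{Gr}_k(m_{ij}-s,V_{ij})$ via Theorem~\ref{thm:split-grassmannian-classification} and then applies the standard formula $r(m-r)$ for the dimension of a Grassmannian with $r=m_{ij}-s$; the only difference is that the paper cites this formula, while you verify it (affine charts $\operatorname{Hom}_k(U_0,C)$ and the parabolic-stabilizer count) and state explicitly that the variety structure is transported along the natural bijection.
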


\begin{proof}
By Theorem~\ref{thm:split-grassmannian-classification}, the ambiguity fibre is identified with $\operatorname{Gr}_k(m_{ij}-s,V_{ij})$. The Grassmannian of \(r\)-dimensional subspaces of an
\(m\)-dimensional vector space is a projective variety of dimension $r(m-r)$. Taking $r=m_{ij}-s$ gives $\dim \mathcal F^{<n,\mathrm{top},s}_{ij} = (m_{ij}-s)s =s(m_{ij}-s)$.
\end{proof}

The formula $\delta_{ij}(s)=s(m_{ij}-s)$, $1\leq s\leq m_{ij}$,
therefore gives a dimension profile for terminal ambiguity: it measures
the geometric size of the family of rank-\(s\) terminal completions left
undetermined by the same proper radical truncation tower. This profile
is maximal when \(s\) is as close as possible to \(m_{ij}/2\), with
\[
\max_s\delta_{ij}(s)
=
\left\lfloor\frac{m_{ij}^{2}}{4}\right\rfloor.
\]
In particular, for \(m_{ij}\geq4\), the rank-one fibre need not be the
largest ambiguity stratum. Over a finite field, the same classification gives a complementary
discrete measure of ambiguity by counting the corresponding isomorphism
classes exactly.

\begin{theorem}\label{thm:finite-field-terminal-count}
Assume the hypotheses of Theorem~\ref{thm:split-grassmannian-classification}, and suppose in addition that $k=\mathbb F_q$. Then, for every
$1\leq s\leq m_{ij}$, the number of isomorphism classes of rank-\(s\) terminal completions
with the prescribed proper radical truncation tower is
\[
\left|
\mathcal F^{<n,\mathrm{top},s}_{ij}
\right|
=
{m_{ij}\brack s}_q,
\]
where
\[
{m\brack s}_q
=
\prod_{r=0}^{s-1}
\frac{q^{m-r}-1}{q^{s-r}-1}
\]
is the Gaussian binomial coefficient.
\end{theorem}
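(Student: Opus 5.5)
The plan is to reduce the count to the classical enumeration of subspaces of a finite vector space. By Theorem~\ref{thm:split-grassmannian-classification}, there is a bijection between $\mathcal F^{<n,\mathrm{top},s}_{ij}$ and $\operatorname{Gr}_k(m_{ij}-s,V_{ij})$, the set of $k$-subspaces of $V_{ij}$ of codimension $s$. So with $k=\mathbb F_q$ and $m=m_{ij}$, it suffices to show that $\mathbb F_q^m$ has exactly ${m\brack s}_q$ subspaces of codimension $s$. Note that this uses only the set-theoretic bijection, not any variety structure, so the finite field causes no difficulty.

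First I use duality to pass from codimension $s$ to dimension $s$. The map $U\mapsto U^\perp\subseteq V_{ij}^*$ is a bijection from codimension-$s$ subspaces of $V_{ij}$ onto $s$-dimensional subspaces of $V_{ij}^*\cong\mathbb F_q^m$. Alternatively, one can invoke the symmetry ${m\brack s}_q={m\brack m-s}_q$, which is visible from the product formula after clearing the factors.

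Next I count $s$-dimensional subspaces by counting ordered bases. The number of ordered linearly independent $s$-tuples in $\mathbb F_q^m$ is $\prod_{r=0}^{s-1}(q^m-q^r)$. This holds because the $(r+1)$-st vector must avoid the span of the first $r$ vectors, which has $q^r$ elements. Each $s$-dimensional subspace has $\prod_{r=0}^{s-1}(q^s-q^r)=|\mathrm{GL}_s(\mathbb F_q)|$ ordered bases. Dividing gives
\[
\prod_{r=0}^{s-1}\frac{q^m-q^r}{q^s-q^r}=\prod_{r=0}^{s-1}\frac{q^{m-r}-1}{q^{s-r}-1},
\]
after cancelling $q^r$ from the numerator and denominator of each factor. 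This matches the stated formula.

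The only point needing care is the first reduction. One must check that the bijection of Theorem~\ref{thm:split-grassmannian-classification} really identifies isomorphism classes of completions with subspaces, that is, that the $k^\times$-action is trivial so no further quotient affects the count. This is already established in that theorem, so the argument is essentially routine. The main work is the elementary orbit count, and the one step to state explicitly is the cancellation of the $q^r$ factors.
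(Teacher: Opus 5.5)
Your proposal is correct and follows the paper's proof. Both arguments use Theorem~\ref{thm:split-grassmannian-classification} to reduce to counting codimension-$s$ subspaces of $V_{ij}\cong\mathbb F_q^{m_{ij}}$ and then pass to dimension $s$: the paper uses the symmetry of Gaussian binomial coefficients, you use the equivalent duality $U\mapsto U^\perp$, and the only other difference is that you derive the subspace count from ordered bases where the paper cites it as standard.
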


\begin{proof}
By Theorem~\ref{thm:split-grassmannian-classification}, $\mathcal F^{<n,\mathrm{top},s}_{ij} \cong \operatorname{Gr}_{\mathbb F_q} (m_{ij}-s,V_{ij})$. The number of \((m_{ij}-s)\)-dimensional subspaces of an
\(m_{ij}\)-dimensional vector space over \(\mathbb F_q\) is
\[
{m_{ij}\brack m_{ij}-s}_q.
\]
Using the symmetry of Gaussian binomial coefficients,
\[
{m_{ij}\brack m_{ij}-s}_q
=
{m_{ij}\brack s}_q,
\]
and hence
\[
\left|
\mathcal F^{<n,\mathrm{top},s}_{ij}
\right|
=
{m_{ij}\brack s}_q.
\]
\end{proof}

The higher-rank classification therefore describes terminal ambiguity
beyond the rank-one projective picture. In the split case, the
dimension profile \(s\mapsto s(m_{ij}-s)\) measures the geometric size
of the ambiguity, while over finite fields the Gaussian binomial
coefficient \({m_{ij}\brack s}_q\) gives the exact number of
nonisomorphic terminal completions at each rank. Thus these
Grassmannian parameter spaces describe and measure the terminal
information left unresolved by proper radical truncation.

\section{Reconstruction from Ambiguity Geometry}

The preceding classification shows that rank-one terminal ambiguity is generally an orbit space of left \(D_j\)-hyperplanes under the right action of \(D_i^\times\). When \(D_j\cong k\), this reduces to the projective quotient studied in Section~4, and hence to a projective space over \(D_i\). We now study reconstruction in this projective case.

The projective geometry of such a terminal ambiguity fibre determines the rank of the corresponding terminal component and, in sufficiently large dimension, the residue division algebra \(D_i\). This provides the reconstruction results used below.

We begin with the simplest invariant. The dimension of a projective space
should be recoverable from its incidence geometry. For projective spaces
over division algebras this remains true and immediately yields recovery
of the terminal rank.

\begin{theorem}\label{thm:rank-recovery}
Let \(D\) be a division algebra, and let \(r,s\ge 1\). Suppose there is an
isomorphism of projective geometries $\mathbb P_D^{\,r-1}\cong\mathbb P_D^{\,s-1}$,
meaning a bijection preserving projective linear subspaces. Then $r=s$.
\end{theorem}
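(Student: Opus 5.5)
Since the bijection preserves projective linear subspaces, the plan is to extract the dimension of the projective space as a purely lattice-theoretic invariant and compare. Write \(\mathbb P_D^{\,r-1}=\mathbb P_D(D^r)\), whose projective linear subspaces are the sets \(\mathbb P_D(U)\) for left \(D\)-submodules \(U\subseteq D^r\), and let \(f:\mathbb P_D^{\,r-1}\to\mathbb P_D^{\,s-1}\) be the given bijection. First, \(f\) preserves inclusion between linear subspaces: inclusion of subspaces is inclusion of point sets, and \(f\) and \(f^{-1}\) both send linear subspaces to linear subspaces. Hence \(f\) induces an order isomorphism between the posets of projective linear subspaces, that is, between the posets \(\mathcal L(D^r)\) and \(\mathcal L(D^s)\) of nonzero submodules, with the empty subspace matched to the empty subspace if it is included.

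The key invariant is the maximal length of a strictly increasing chain of linear subspaces. In \(\mathbb P_D(D^r)\), a chain \(\mathbb P_D(U_1)\subsetneq\cdots\subsetneq\mathbb P_D(U_m)\) yields \(U_1\subsetneq\cdots\subsetneq U_m\). Since \(D\) is a division algebra, submodules of \(D^r\) are free with well-defined rank, and the rank is strictly monotone on proper inclusions (a proper submodule of a finite-rank free module over a division ring has strictly smaller rank, by basis extension). Thus \(m\le r\). A full flag \(D e_1\subsetneq De_1\oplus De_2\subsetneq\cdots\subsetneq D^r\) attains \(m=r\). So the maximal chain length of nonempty projective subspaces in \(\mathbb P_D^{\,r-1}\) is exactly \(r\).

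An order isomorphism preserves maximal chain lengths. Therefore \(r=s\). Equivalently, one can note that points correspond to the minimal nonempty subspaces and define dimension inductively via joins, but the chain-length argument is cleaner.

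The only step that needs any care is the invariance of rank for modules over a (possibly noncommutative) division algebra, which gives both the strict monotonicity and the existence of a full flag. This is standard, by Steinitz exchange over division rings, as already used implicitly in Theorem~\ref{thm:nonsplit-projective-space}. I do not expect a real obstacle. The degenerate case \(r=1\) or \(s=1\), a single point, is covered, since the only nonempty subspace is the whole space and the chain length is \(1\). Alternatively, when \(D\) is finite-dimensional over \(k\) one could compare \(k\)-dimensions of \(\mathrm{Hom}\) spaces, but \(D\) is not assumed finite here, so the lattice argument is the one I would use.
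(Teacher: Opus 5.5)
Your proposal is correct and follows essentially the same argument as the paper: both identify the projective dimension with the maximal length of a strictly increasing chain of linear subspaces, bound that length by $r$ using freeness and strict monotonicity of rank for submodules of $D^r$, attain the bound with a full flag, and conclude because the isomorphism preserves such chains. You explicitly require that both $f$ and $f^{-1}$ send subspaces to subspaces, which the paper uses only implicitly; this is the right reading of \emph{isomorphism} and it is genuinely needed, since for infinite $D$ any bijection $\mathbb P_D^{\,1}\to\mathbb P_D^{\,2}$ sends subspaces to subspaces in the forward direction.
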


\begin{proof}
By definition, \(\mathbb P_D^{\,r-1}=\mathbb P_D(D^r)\) is the projective
geometry whose points are one-dimensional left \(D\)-submodules of \(D^r\). The projective dimension of \(\mathbb P_D(D^r)\) is \(r-1\). Equivalently, the
largest possible length of a strictly increasing chain of projective linear
subspaces is \(r-1\). Indeed, projective linear subspaces of \(\mathbb P_D(D^r)\) are precisely the
subsets of the form \(\mathbb P_D(U)\), where \(U\subseteq D^r\) is a nonzero
left \(D\)-submodule. Thus a strictly increasing chain of projective linear subspaces
\[\mathbb P_D(U_1)\subsetneq\mathbb P_D(U_2)\subsetneq\cdots\subsetneq
\mathbb P_D(U_m)\subseteq\mathbb P_D(D^r)\] corresponds exactly to a strictly
increasing chain of nonzero left \(D\)-submodules
\[U_1\subsetneq U_2\subsetneq\cdots\subsetneq U_m\subseteq D^r.\]

Since every left \(D\)-submodule of \(D^r\) is free, the ranks strictly increase
along such a chain. Hence \(m\le r\). Moreover, the bound is attained by
choosing a basis \(e_1,\ldots,e_r\) of \(D^r\) and taking
\(U_t=De_1\oplus\cdots\oplus De_t\) for \(1\le t\le r\). Thus the maximal
number of terms in such a chain is \(r\), and the projective dimension is
\(r-1\). If \(\mathbb P_D^{\,r-1}\cong\mathbb P_D^{\,s-1}\) as projective geometries,
then projective linear subspaces, and hence maximal chains of projective linear
subspaces, are preserved. Therefore the two projective geometries have the same
projective dimension: \(r-1=s-1\). Thus \(r=s\).
\end{proof}

Thus the rank of the underlying module is encoded intrinsically in the
projective geometry. Applied to terminal ambiguity fibres, this shows that
the size of the terminal component \(V_{ij}\) can be read directly from the
geometry of the fibre.

\begin{corollary}\label{cor:terminal-rank-recovery}
Let \(\Lambda\) be a finite-dimensional algebra with radical \(J\) satisfying \(J^n=0\). Let \(e_i,e_j\) be primitive idempotents and set \(D_i=e_i\Lambda e_i/e_iJe_i\), \(D_j=e_j\Lambda e_j/e_jJe_j\), \(V_{ij}=e_jJ^{n-1}e_i\), and \(r_{ij}=\dim_{D_i}V_{ij}\). Assume \(V_{ij}\neq0\) and \(D_j\cong k\). Then the projective dimension of the rank-one terminal ambiguity fibre recovers the terminal right rank:
\[
r_{ij}
=
\dim \mathcal F^{<n,\mathrm{top},1}_{ij}+1.
\]
\end{corollary}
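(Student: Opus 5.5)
We would prove the corollary by chaining the rank-one case of the Grassmannian classification with the projectivization results of Section~4, and then reading off the projective dimension.

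\emph{Step 1: reduce to a projective quotient.} Apply Theorem~\ref{thm:grassmannian-classification} with \(s=1\). This gives
\[
\mathcal F^{<n,\mathrm{top},1}_{ij}\cong\operatorname{Gr}_{D_j}(\ell_{ij}-1,V_{ij})/D_i^\times .
\]
Since \(D_j\cong k\), the left \(D_j\)-hyperplanes of \(V_{ij}\) are exactly the \(k\)-hyperplanes. Sending \(\lambda\mapsto\ker\lambda\) identifies these hyperplanes with \(\mathbb P_k(V_{ij}^*)\). We then check that this identification is \(D_i^\times\)-equivariant. If \(a\in D_i^\times\), then \((\ker\lambda)a=\ker(\lambda(-a^{-1}))\), and this is precisely the action \((a\cdot\lambda)(v)=\lambda(va^{-1})\) used in Theorem~\ref{thm:division-projectivization}. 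Hence
\[
\mathcal F^{<n,\mathrm{top},1}_{ij}\cong\mathbb P_k(V_{ij}^*)/D_i^\times=\mathcal P_{D_i}(V_{ij}),
\]
where \(V_{ij}\) is regarded as a right \(D_i\)-module.

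\emph{Step 2: identify the projective space.} Theorem~\ref{thm:nonsplit-projective-space} identifies \(\mathcal P_{D_i}(V_{ij})\) with \(\mathbb P_{D_i}(\operatorname{Hom}_{D_i}(V_{ij},D_i^*))\). It also gives a noncanonical identification of this space with \(\mathbb P_{D_i}^{\,r_{ij}-1}\), where \(r_{ij}=\dim_{D_i}V_{ij}\).

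\emph{Step 3: read off the dimension.} We equip the fibre with the projective geometry transported through these bijections, and define its projective dimension as in the proof of Theorem~\ref{thm:rank-recovery}. That proof shows the dimension equals \(r_{ij}-1\). Theorem~\ref{thm:rank-recovery} also shows it is well defined, since it does not depend on the chosen identification with some \(\mathbb P_{D_i}^{\,r-1}\). Hence \(r_{ij}=\dim\mathcal F^{<n,\mathrm{top},1}_{ij}+1\).

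The main obstacle is Step 1, specifically the equivariance check. One must confirm that the right action of \(D_i^\times\) on hyperplanes, as it arises from automorphisms of \(P_i\), matches the dual action on \(\mathbb P_k(V_{ij}^*)\) in Section~4. In particular, the inverse \(a^{-1}\) has to make the map a genuine left action of \(D_i^\times\) on \(\mathbb P_k(V_{ij}^*)\), and a right action on hyperplanes. A smaller point is that \(\dim\) here must mean projective dimension over \(D_i\), not \(k\)-dimension; we would state this convention explicitly.
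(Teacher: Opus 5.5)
Your proposal is correct and follows essentially the same route as the paper. Like the paper, you specialize Theorem~\ref{thm:grassmannian-classification} to \(s=1\), identify the fibre with \(\mathbb P_k(V_{ij}^*)/D_i^\times\), and apply Theorem~\ref{thm:nonsplit-projective-space} to read off projective dimension \(r_{ij}-1\); your explicit equivariance check \((\ker\lambda)a=\ker(a\cdot\lambda)\) and your well-definedness remark supply details the paper leaves implicit.
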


\begin{proof}
Since \(D_j\cong k\), Theorem~\ref{thm:grassmannian-classification}, specialized to \(s=1\), identifies the rank-one terminal ambiguity fibre with the quotient \(\mathbb P_k(V_{ij}^*)/D_i^\times\). By Theorem~\ref{thm:nonsplit-projective-space}, this quotient is noncanonically isomorphic to \(\mathbb P_{D_i}^{\,r_{ij}-1}\). Therefore its projective dimension is \(r_{ij}-1\).
\end{proof}

Recovering rank is only the first step. A more substantial question is
whether the projective geometry also remembers the division algebra over
which it is defined. In dimensions at least two, the Fundamental Theorem
of Projective Geometry shows that this is indeed the case.

\begin{theorem}\label{thm:division-algebra-recovery}
Let \(D\) and \(E\) be division algebras, and let \(r,s\ge 3\). Suppose there
is an isomorphism of projective geometries
\[
\mathbb P_D^{\,r-1}
\cong
\mathbb P_E^{\,s-1},
\]
meaning a bijection preserving projective linear subspaces. Then $r=s$. Moreover, the division algebras \(D\) and \(E\) are isomorphic, up to the
left/right convention for projective spaces. Equivalently, one obtains $D\cong E$ or, if opposite-side conventions are used,
\[
D\cong E^{\mathrm{op}}.
\]
where \(E^{\mathrm{op}}\) denotes the opposite division algebra. This ambiguity reflects the fact that projective geometries arising from left \(E\)-modules and from right \(E\)-modules are naturally identified.
\end{theorem}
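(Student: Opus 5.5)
First, equality of ranks follows from Theorem~\ref{thm:rank-recovery}, applied to the underlying lattices of projective subspaces. The argument there only used that an isomorphism of projective geometries preserves strictly increasing chains of projective linear subspaces. It never used that the two geometries are over the same division algebra. So the maximal chain length in \(\mathbb P_D^{\,r-1}\) is \(r\) and in \(\mathbb P_E^{\,s-1}\) is \(s\), and the given bijection forces \(r=s\). Write \(r\) for the common value. The hypothesis \(r\ge3\) says the projective dimension is at least two, which is exactly the range where the Fundamental Theorem of Projective Geometry applies.

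Next I would invoke that theorem in the form given by Artin and by Faure--Fr\"olicher \cite{Artin57,FaureFrolicher00}. Let \(V=D^r\) be a left \(D\)-module and \(W=E^r\) a left \(E\)-module, each of dimension at least \(3\). Then every bijection \(\mathbb P_D(V)\to\mathbb P_E(W)\) preserving projective lines (equivalently, all projective linear subspaces) is induced by a semilinear bijection \(f:V\to W\). Semilinear here means there is a ring isomorphism \(\sigma:D\to E\) with \(f(v+w)=f(v)+f(w)\) and \(f(av)=\sigma(a)f(v)\). Our bijection preserves all projective linear subspaces, so in particular it preserves lines, and the theorem yields \(\sigma\). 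Hence \(D\cong E\).

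Finally I would address the side convention. If \(\mathbb P_E^{\,s-1}\) is instead modelled on one-dimensional right \(E\)-submodules of \(E^s\), rewrite right \(E\)-modules as left \(E^{\mathrm{op}}\)-modules via \(x\cdot a:=ax\). This identifies the right-module projective geometry over \(E\) with the left-module geometry over \(E^{\mathrm{op}}\), and the same subsets are projective linear subspaces on both sides. Applying the previous step then gives \(D\cong E^{\mathrm{op}}\). This explains the phrasing of the statement.

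The main obstacle is the Fundamental Theorem itself, in particular that a line-preserving bijection determines a ring isomorphism of coordinate division rings. I would not reprove it; I would cite it. If a sketch were needed, I would take the following route. Fix a frame \(p_0,\dots,p_r\) in general position and its image. Coordinatize one projective line through the frame by \(D\cup\{\infty\}\), and the image line by \(E\cup\{\infty\}\). Then define addition and multiplication by the standard Desargues-style constructions inside a projective plane; this is where \(r\ge3\) is needed, since the plane must exist. The bijection transports these constructions, because they use only joins and meets, and so the induced map \(D\to E\) is additive and multiplicative.
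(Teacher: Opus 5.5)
Your proposal is correct and follows essentially the same route as the paper: you get $r=s$ from the maximal length of chains of projective linear subspaces, then apply the Fundamental Theorem of Projective Geometry in projective dimension at least two to obtain a semilinear bijection over an isomorphism $D\to E$, and you handle the right-module convention by passing to left $E^{\mathrm{op}}$-modules to explain $D\cong E^{\mathrm{op}}$. The only difference is your optional coordinatization sketch of the Fundamental Theorem, which the paper does not give and simply cites.
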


The theorem shows that sufficiently large projective ambiguity geometries
remember not only their dimension but also the underlying residue division
algebra. Consequently, terminal ambiguity geometry retains substantially more information in the nonsplit case: beyond the projective dimension, it can also encode the underlying residue division algebra.

\begin{proof}
The projective dimension of \(\mathbb P_D^{\,r-1}=\mathbb P_D(D^r)\) is
\(r-1\). Indeed, projective linear subspaces correspond to nonzero left
\(D\)-submodules of \(D^r\), and the longest strictly increasing chains of such
submodules have ranks \(1<2<\cdots<r\). Thus the projective dimension is
\(r-1\). Similarly, the projective dimension of \(\mathbb P_E^{\,s-1}\) is
\(s-1\). Since an isomorphism of projective geometries preserves projective linear
subspaces, it preserves projective dimension. Hence \(r-1=s-1\), and therefore
\(r=s\).

Since \(r,s\ge 3\), both projective geometries have projective dimension at
least \(2\). By the Fundamental Theorem of Projective Geometry, an isomorphism
of projective geometries of dimension at least \(2\) is induced by a semilinear
isomorphism between the underlying vector spaces. Thus the given isomorphism is induced by a semilinear isomorphism
\(D^r \longrightarrow E^r\). The semilinearity is with respect to an
isomorphism of the underlying division algebras, subject to the usual
left/right convention. Therefore \(D\cong E\), or, if one identifies projective
geometries built from opposite-sided modules, \(D\cong E^{\mathrm{op}}\). Hence \(D\) and \(E\) are recovered from the projective geometry up to the
standard opposite-algebra ambiguity.
\end{proof}

Applied to rank-one terminal ambiguity fibres satisfying \(D_j\cong k\), the theorem shows that sufficiently large projective ambiguity geometries remember not only their dimension but also the underlying residue division algebra \(D_i\), up to the standard opposite-algebra ambiguity.

Together with Corollary~\ref{cor:terminal-rank-recovery}, this gives reconstruction of the right rank and residue division algebra in the projective case. These results form the foundation for the species reconstruction statements below.

\section{Reconstruction of Terminal Species}

The reconstruction results of the previous section show that terminal
ambiguity geometry remembers both projective dimension and, in
sufficiently large rank, the underlying residue division algebra.
The next step is to determine how much of the terminal species can be
recovered from the ambiguity geometry itself.

We begin with a single terminal component and show that its ambiguity
fibre determines the corresponding local species data. We then combine
these local reconstruction results to recover the entire terminal
species.

The classification theorem expresses each terminal ambiguity fibre as a
projective geometry over the residue division algebra \(D_i\). The
preceding reconstruction results therefore suggest that the fibre should
already contain enough information to recover the corresponding local
species component.

\begin{corollary}\label{cor:local-species-reconstruction}
Let \(\Lambda\) be a finite-dimensional algebra with radical \(J\) satisfying \(J^n=0\). Let \(e_i,e_j\) be primitive idempotents and set \(D_i=e_i\Lambda e_i/e_iJe_i\), \(D_j=e_j\Lambda e_j/e_jJe_j\), \(V_{ij}=e_jJ^{n-1}e_i\), and \(r_{ij}=\dim_{D_i}V_{ij}\). Assume \(D_j\cong k\) and \(r_{ij}\ge3\). Then the projective geometry of \(\mathcal F^{<n,\mathrm{top},1}_{ij}\) determines the local terminal bimodule data \({}_{k}V_{ij}{}_{D_i}\), together with \(D_i\), up to the standard opposite-division-algebra ambiguity.
\end{corollary}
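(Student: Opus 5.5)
The plan is to obtain the corollary by combining the identification of the rank-one fibre as a projective space over \(D_i\) with the two recovery theorems of the previous section, and then to rebuild the bimodule from the recovered data.

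First, since \(D_j\cong k\), Theorem~\ref{thm:grassmannian-classification} with \(s=1\) identifies \(\mathcal F^{<n,\mathrm{top},1}_{ij}\) with the quotient \(\mathbb P_k(V_{ij}^*)/D_i^\times\), which is \(\mathcal P_{D_i}(V_{ij})\). Theorem~\ref{thm:nonsplit-projective-space} then gives a natural identification
\[
\mathcal F^{<n,\mathrm{top},1}_{ij}\cong\mathbb P_{D_i}\bigl(\operatorname{Hom}_{D_i}(V_{ij},D_i^*)\bigr),
\]
and a noncanonical identification with \(\mathbb P_{D_i}^{\,r_{ij}-1}\). I will take the projective geometry structure on the fibre to be the one transported along this identification, namely the lattice of subsets \(\mathbb P_{D_i}(U)\).

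Second, the recovery steps. Corollary~\ref{cor:terminal-rank-recovery} gives \(r_{ij}\) as one more than the projective dimension of the fibre. Because \(r_{ij}\ge3\), the projective dimension is at least \(2\). Theorem~\ref{thm:division-algebra-recovery} therefore applies: any abstract projective geometry isomorphic to the fibre is coordinatised by a division algebra that is isomorphic to \(D_i\) or to \(D_i^{\mathrm{op}}\). Concretely, by the Fundamental Theorem, the fibre determines a vector space \(W\) over a division algebra \(D\), unique up to semilinear isomorphism, with \(\mathbb P_D(W)\) equal to the fibre. Then \(D\cong D_i\) up to opposite, and \(\dim_D W=r_{ij}\).

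Third, reconstruct the bimodule. A \((k,D_i)\)-bimodule is the same thing as a right \(D_i\)-module together with a compatible \(k\)-structure, which here is just the \(k\)-algebra structure of \(D_i\). Since every right \(D_i\)-module is free, \({}_kV_{ij}{}_{D_i}\) is determined up to isomorphism by the pair \((D_i,r_{ij})\): it is \(D_i^{\,r_{ij}}\) with \(k\) acting through \(k\subseteq Z(D_i)\). Both pieces of data have been recovered. Alternatively, one can recover \(V_{ij}\) more canonically by dualising \(\operatorname{Hom}_{D_i}(V_{ij},D_i^*)\), using \(D_i^*\cong D_i\) as in the proof of Theorem~\ref{thm:nonsplit-projective-space}.

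The main obstacle I expect is making \emph{determines} precise with respect to the \(k\)-structure. The Fundamental Theorem recovers \(D_i\) only as an abstract division ring, up to opposite. It recovers neither the embedding of \(k\) into its centre nor \(\dim_k D_i\). I would address this by stating the conclusion up to ring isomorphism or anti-isomorphism, which is exactly the \emph{standard opposite-division-algebra ambiguity} in the corollary. I would also note that the left \(k\)-action on \(V_{ij}\) is the restriction of the right \(D_i\)-action to central scalars, so that it is carried along by any semilinear reconstruction. Replacing \(D_i\) by \(D_i^{\mathrm{op}}\) turns right modules into left modules, which accounts for the stated ambiguity on the bimodule side as well.
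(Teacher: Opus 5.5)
Your proposal is correct and follows the paper's proof essentially step for step: identify \(\mathcal F^{<n,\mathrm{top},1}_{ij}\) with \(\mathbb P_{D_i}^{\,r_{ij}-1}\) via Theorem~\ref{thm:grassmannian-classification} at \(s=1\) and Theorem~\ref{thm:nonsplit-projective-space}, read off \(r_{ij}\) from the projective dimension, recover \(D_i\) up to opposite by Theorem~\ref{thm:division-algebra-recovery}, and rebuild \(V_{ij}\cong D_i^{\,r_{ij}}\) by freeness, with \(D_j\cong k\) acting by scalars. Your caveat about the \(k\)-structure is a genuine point that the paper passes over silently; for example, with \(k=\mathbb Q(t)\) and \(r_{ij}=3\), taking \(D_i=k\) or \(D_i=k(\sqrt t)\cong\mathbb Q(t)\) gives isomorphic projective planes although \(\dim_kV_{ij}\) is \(3\) versus \(6\) — but this is an ambiguity beyond the opposite-algebra one, so your ring-level conclusion is strictly weaker than the literal statement rather than \emph{exactly} the ambiguity it names.
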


\begin{proof}
Since \(D_j\cong k\), Theorem~\ref{thm:grassmannian-classification}, specialized to \(s=1\), and Theorem~\ref{thm:nonsplit-projective-space} give a noncanonical identification
\[
\mathcal F^{<n,\mathrm{top},1}_{ij}
\cong
\mathbb P_{D_i}^{\,r_{ij}-1}.
\]
Since \(r_{ij}\ge3\), Theorem~\ref{thm:division-algebra-recovery} recovers \(D_i\), up to the standard opposite-division-algebra ambiguity, and the projective dimension recovers \(r_{ij}\). Since every finite-dimensional right \(D_i\)-module is free, \(V_{ij}\cong D_i^{\,r_{ij}}\) as a right \(D_i\)-module. The left action is the scalar \(k\)-action because \(D_j\cong k\). Hence the local bimodule data are determined up to the stated ambiguity.
\end{proof}

Thus, in rank at least three, the local ambiguity geometry contains the
full information of the corresponding terminal species component. The
only information lost is the standard opposite-division-algebra
ambiguity arising from projective geometry itself. For the purposes of the next result, we write
$\mathsf{Sp}^{\mathrm{top}}(\Lambda)=(D_i,V_{ij})_{i,j}$ for the terminal species of \(\Lambda\), where
$D_i=e_i\Lambda e_i/e_iJe_i$ and $V_{ij}=e_jJ^{n-1}e_i$.

\begin{theorem}\label{thm:global-species-reconstruction}
Let \(\Lambda\) be a finite-dimensional algebra with radical \(J\) satisfying $J^n=0$. Let $e_1,\ldots,e_t$
be a complete set of primitive idempotents. For each \(i,j\), set $D_i=e_i\Lambda e_i/e_iJe_i$, $D_j=e_j\Lambda e_j/e_jJe_j$, $V_{ij}=e_jJ^{n-1}e_i$, $r_{ij}=\dim_{D_i}V_{ij}$. Assume that $D_j\cong k$ whenever $V_{ij}\neq 0$, and that for every \(i\) such that
$J^{n-1}P_i\neq 0$, there exists at least one \(j\) with $r_{ij}\ge 3$. Then the labelled collection of terminal ambiguity fibres
\[
\left\{
\mathcal F^{<n,\mathrm{top},1}_{ij}
:
V_{ij}\neq 0
\right\}_{i,j}
\]
determines the terminal species
\[
\mathsf{Sp}^{\mathrm{top}}(\Lambda)
=
\bigl(D_i,{}_{D_j}V_{ij}{}_{D_i}\bigr)_{i,j}
\]
up to the standard opposite-division-algebra ambiguity for each recovered
residue division algebra \(D_i\).
\end{theorem}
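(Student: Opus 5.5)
The argument assembles Corollary~\ref{cor:local-species-reconstruction} over all vertices, using the labelling of the collection to keep track of which pair \((i,j)\) each fibre belongs to. The terminal species consists of two kinds of data: the residue division algebras \(D_i\), and the bimodules \({}_{D_j}V_{ij}{}_{D_i}\). I would recover these in three steps.

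\emph{Step one: the support.} The set of pairs with \(V_{ij}\neq0\) is read off directly as the index set of the labelled collection. For every pair not in this set, \(V_{ij}=0\), and the corresponding species component is zero. The hypothesis \(D_j\cong k\) on the support means the left division algebra of every nonzero component is already known to be \(k\). This settles the left-hand data.

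\emph{Step two: the right ranks.} For each labelled pair, Corollary~\ref{cor:terminal-rank-recovery} recovers \(r_{ij}=\dim\mathcal F^{<n,\mathrm{top},1}_{ij}+1\). Here the projective dimension is intrinsic to the incidence geometry, by Theorem~\ref{thm:rank-recovery}.

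\emph{Step three: the residue division algebras.} Fix \(i\) with \(J^{n-1}P_i\neq0\). By hypothesis there is a \(j\) with \(r_{ij}\ge3\), and step two lets us locate such a \(j\) from the data. Applying Corollary~\ref{cor:local-species-reconstruction} to that fibre recovers \(D_i\) up to opposite. The bimodule \({}_kV_{ij}{}_{D_i}\) is then determined, up to isomorphism, as \(D_i^{\,r_{ij}}\) with the scalar \(k\)-action. This uses only the recovered \(D_i\) and \(r_{ij}\), so it applies to every \(j\) in the support row of \(i\), including those with \(r_{ij}\le2\). For those low-rank fibres the geometry alone would not see \(D_i\); it is imported from the rank-\(\ge3\) fibre with the same right index \(i\). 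This is why the hypothesis is stated per \(i\) rather than per pair.

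The main obstacle is consistency and coverage of the \(D_i\).

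\emph{Consistency.} Different rank-\(\ge3\) fibres with the same \(i\) might return \(D_i\) in one case and \(D_i^{\mathrm{op}}\) in another. I would handle this by fixing one chosen \(j\) per \(i\) and phrasing the conclusion as determination up to a single opposite-algebra choice for each \(i\), which matches the ambiguity allowed in the statement.

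\emph{Coverage.} There may be vertices \(i\) with \(J^{n-1}P_i=0\). For these, every \(V_{ij}\) vanishes, so \(D_i\) does not enter any nonzero bimodule component and no fibre is attached. I would interpret the statement as recovering the species on the terminal support, or note that such \(D_i\) are not determined and not required. I would make this interpretation explicit in the proof.

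Finally, \(J^{n-1}P_i=\bigoplus_j e_jJ^{n-1}e_i\), so the condition \(J^{n-1}P_i\neq0\) is equivalent to \(i\) occurring as a right index in the labelled support. The hypothesis is therefore checkable from the collection itself.
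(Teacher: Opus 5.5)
Your proposal is correct and follows the paper's proof essentially step for step: for each \(i\) with \(J^{n-1}P_i\neq0\) you recover \(D_i\) from one fibre with \(r_{ij}\ge3\) via Corollary~\ref{cor:local-species-reconstruction}, read off every \(r_{ij}\) on the support via Corollary~\ref{cor:terminal-rank-recovery}, rebuild \(V_{ij}\cong D_i^{\,r_{ij}}\) with the scalar left \(k\)-action, and let absent fibres record the zero components. Your explicit treatment of vertices with \(J^{n-1}P_i=0\) is more careful than the paper's, which only notes that they contribute no fibres; one small correction is that such a vertex can still be the left index of a nonzero \(V_{i'i}=e_iJ^{n-1}e_{i'}\), so it does enter the species, but then \(D_i\cong k\) by hypothesis, as your Step one already records.
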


\begin{proof}
Fix \(i\) such that \(J^{n-1}P_i\neq 0\). By hypothesis, there exists at least
one \(j\) such that \(r_{ij}\ge 3\). For this pair \((i,j)\), Corollary~\ref{cor:local-species-reconstruction} implies that the projective geometry of
\(\mathcal F^{<n,\mathrm{top},1}_{ij}\) determines \(D_i\) up to the standard
opposite-division-algebra ambiguity. Now fix any \(j\). If \(V_{ij}=0\), then, since the collection is labelled by the fixed vertex set, this zero component is determined by the absence of a fibre at \((i,j)\).

If \(V_{ij}\neq 0\), then \(D_j\cong k\) by hypothesis. Hence
Corollary~\ref{cor:terminal-rank-recovery} recovers \(r_{ij}\) from the
projective dimension of
\(\mathcal F^{<n,\mathrm{top},1}_{ij}\).
Since \(D_i\) has already been recovered from one rank-at-least-three
component, and since every finite-dimensional right \(D_i\)-module is free, the
module \(V_{ij}\) is determined up to isomorphism by
\(r_{ij}=\dim_{D_i}V_{ij}\). Indeed,
\(V_{ij}\cong D_i^{\,r_{ij}}\) as a right \(D_i\)-module. Since
\(D_j\cong k\), the left \(D_j\)-action is the scalar \(k\)-action. Thus, for
the fixed \(i\), the labelled family of fibres recovers all components
\(\bigl(D_i,{}_{D_j}V_{ij}{}_{D_i}\bigr)_j\).

Repeating this argument for every \(i\) with \(J^{n-1}P_i\neq 0\) recovers all
nonzero terminal species components. Components with \(J^{n-1}P_i=0\) have
\(V_{ij}=0\) for all \(j\), and hence contribute no nonzero terminal ambiguity
fibres. Therefore the labelled collection
\(\left\{
\mathcal F^{<n,\mathrm{top},1}_{ij}
:
V_{ij}\neq 0
\right\}_{i,j}\)
determines the terminal species
\(\mathsf{Sp}^{\mathrm{top}}(\Lambda)
=
\bigl(D_i,{}_{D_j}V_{ij}{}_{D_i}\bigr)_{i,j}\)
up to the standard opposite-division-algebra ambiguity for each recovered
residue division algebra \(D_i\).
\end{proof}

Under the hypotheses above, the theorem shows that terminal ambiguity geometry is rich enough to recover the complete terminal species. In particular, the terminal ambiguity fibres do not merely encode numerical invariants such as projective dimensions; they determine the residue division algebras and terminal module components from which the species is built.

This result forms the first half of the geometry and species equivalence developed in the following sections. The remaining task is to relate the reconstructed terminal species to the terminal radical layer and to determine how much information is retained when passing from the species to its associated ambiguity geometry.

\section{Reconstruction of the Terminal Radical Layer}

The previous section showed that the geometry of a terminal ambiguity
fibre recovers the local terminal species data from which it arises.
We now pass from individual fibres to the entire terminal radical
layer. Since the terminal species records precisely the components
\[
V_{ij}=e_jJ^{n-1}e_i,
\]
one expects the collection of terminal ambiguity fibres to retain
enough information to reconstruct the whole semisimple boundary
bimodule \(J^{n-1}\). The following theorem confirms that this is
indeed the case.

\begin{theorem}\label{thm:terminal-radical-layer-reconstruction} 
Let \(\Lambda\) be a finite-dimensional algebra with radical \(J\) satisfying $J^n=0$. Let $e_1,\ldots,e_t$ be a complete set of primitive idempotents. For each \(i,j\), set 
$D_i=e_i\Lambda e_i/e_iJe_i$, $D_j=e_j\Lambda e_j/e_jJe_j$, $V_{ij}=e_jJ^{n-1}e_i$, $r_{ij}=\dim_{D_i}V_{ij}$. Assume that $D_j\cong k$ whenever $V_{ij}\neq 0$, and that for every \(i\) with $J^{n-1}P_i\neq 0$, 
there exists at least one \(j\) such that $r_{ij}\ge 3$. Then the labelled terminal ambiguity geometry 
\[ 
\left\{ 
\mathcal F^{<n,\mathrm{top},1}_{ij} 
: 
V_{ij}\neq 0 
\right\}_{i,j} 
\] 
determines the terminal radical layer $J^{n-1}$ as a semisimple boundary bimodule over the recovered residue division 
algebras, up to the standard opposite-division-algebra ambiguity. 
\end{theorem}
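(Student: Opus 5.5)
The plan is to deduce the statement from Theorem~\ref{thm:global-species-reconstruction}, using the Peirce decomposition of the terminal layer. Since \(e_1,\ldots,e_t\) is a complete set of orthogonal primitive idempotents, \(1=\sum_h e_h\). Hence
\[
J^{n-1}=\bigoplus_{i,j} e_jJ^{n-1}e_i=\bigoplus_{i,j}V_{ij}
\]
as \(k\)-vector spaces. The relations \(J\cdot J^{n-1}=J^{n-1}\cdot J=0\) show that the \((\Lambda,\Lambda)\)-bimodule structure on \(J^{n-1}\) factors through \(\Lambda/J\otimes(\Lambda/J)^{\mathrm{op}}\). Because \(\Lambda\) is basic, \(\Lambda/J\cong\prod_h D_h\), where \(e_h\) maps to the identity of the factor \(D_h\). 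The first step is therefore to show that \(J^{n-1}\), as a \(\Lambda/J\)-bimodule, is exactly the direct sum of the \((D_j,D_i)\)-bimodules \({}_{D_j}V_{ij}{}_{D_i}\). In this sum, \(D_h\) acts on the left through the \(j=h\) summands and on the right through the \(i=h\) summands, and by zero on all other summands. This identification is the precise meaning of ``semisimple boundary bimodule over the residue division algebras''. It also shows that the isomorphism type of \(J^{n-1}\) is a function of the labelled family \((D_i,{}_{D_j}V_{ij}{}_{D_i})_{i,j}\), that is, of \(\mathsf{Sp}^{\mathrm{top}}(\Lambda)\).

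The second step is to invoke Theorem~\ref{thm:global-species-reconstruction}, whose hypotheses coincide with those of the present theorem. It recovers \(\mathsf{Sp}^{\mathrm{top}}(\Lambda)\) from the labelled collection of rank-one fibres, up to replacing each recovered \(D_i\) by \(D_i^{\mathrm{op}}\) where necessary. I will also record explicitly what is recovered for each \(i\) with \(J^{n-1}P_i\neq0\). The first item is \(D_i\), obtained from a component with \(r_{ij}\ge3\) via Theorem~\ref{thm:division-algebra-recovery}. The second is every \(r_{ij}\), obtained from Corollary~\ref{cor:terminal-rank-recovery}. The third is the fact that \(D_j\cong k\) acts by scalars whenever \(V_{ij}\neq0\). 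From these data each component is rebuilt as \({}_kD_i^{\,r_{ij}}{}_{D_i}\), and the components are then reassembled into
\[
\bigoplus_{i,j}{}_kD_i^{\,r_{ij}}{}_{D_i}
\]
with the vertex-indexed actions described in the first step. For indices \(i\) with \(J^{n-1}P_i=0\), all \(V_{ij}\) vanish and contribute nothing. Consequently, the division algebra \(D_i\) for such \(i\) is not needed to describe \(J^{n-1}\). I will note this explicitly, so that the phrase ``over the recovered residue division algebras'' is justified.

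The main obstacle is making the opposite-algebra ambiguity and the gluing consistent. The same \(D_i\) acts on the right on every \(V_{ij}\), \(j=1,\ldots,t\). One must therefore check that a single choice of \(D_i\) or \(D_i^{\mathrm{op}}\), made once per vertex \(i\), is used for all \(j\). Different rank-\(\ge3\) components might otherwise suggest incompatible identifications. I would handle this by fixing, for each \(i\), one reference component with \(r_{ij}\ge3\) and declaring the recovered algebra to be the one obtained there. All other \(V_{ij}\) are then defined as free right modules of rank \(r_{ij}\) over that fixed algebra. This is legitimate because a free module is determined by its rank alone. Any residual ambiguity is then exactly a simultaneous passage \(D_i\rightsquigarrow D_i^{\mathrm{op}}\) on all summands with right index \(i\), which is the ambiguity allowed in the statement. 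A secondary point is that the left actions are unambiguous, since \(D_j\cong k\) is commutative whenever \(V_{ij}\neq0\). Consequently the only choice in the reconstruction is the right-hand one, made vertex by vertex.
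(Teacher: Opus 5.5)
Your proposal is correct and follows the paper's proof. Like the paper, you decompose \(J^{n-1}=\bigoplus_{i,j}e_jJ^{n-1}e_i=\bigoplus_{i,j}V_{ij}\), invoke Theorem~\ref{thm:global-species-reconstruction} to recover each component \({}_{D_j}V_{ij}{}_{D_i}\) up to the opposite-algebra ambiguity, and reassemble; your explicit description of the \(\prod_h D_h\)-bimodule structure and your choice of one reference component per vertex \(i\) just spell out what the paper leaves implicit (the latter is already how the proof of Theorem~\ref{thm:global-species-reconstruction} proceeds).
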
 
 
The argument is straightforward once the global species reconstruction 
theorem is available. The terminal radical layer is obtained by 
assembling the individual species components \(V_{ij}\), and these are 
precisely the objects recovered from the ambiguity geometry. 
 
\begin{proof} 
The terminal radical layer decomposes as 
\[
J^{n-1} 
= 
\bigoplus_{i,j} e_jJ^{n-1}e_i.
\] 
By definition,  $e_jJ^{n-1}e_i  =  V_{ij}$. By Theorem~\ref{thm:global-species-reconstruction}, the labelled collection 
of terminal ambiguity fibres determines the terminal species $\mathsf{Sp}^{\mathrm{top}}(\Lambda) = \bigl(D_i,{}_{D_j}V_{ij}{}_{D_i}\bigr)_{i,j}$ up to the standard opposite-division-algebra ambiguity for each recovered  residue division algebra \(D_i\). In particular, for every pair \((i,j)\), it determines the boundary component  ${}_{D_j}V_{ij}{}_{D_i}
={}_{D_j}(e_jJ^{n-1}e_i){}_{D_i}$ as part of the terminal species.  Since 
\[
J^{n-1} 
= 
\bigoplus_{i,j} V_{ij},
\] 
the collection of all recovered boundary components reconstructs 
\(J^{n-1}\) 
as a semisimple boundary bimodule over the recovered residue division 
algebras.  The only ambiguity is the standard opposite-division-algebra ambiguity already 
present in the projective-geometry reconstruction of the residue division 
algebras. Hence the labelled terminal ambiguity geometry determines 
\(J^{n-1}\) 
as a semisimple boundary bimodule, up to this standard ambiguity. 
\end{proof}

The theorem completes the reconstruction programme initiated in the
previous section. Starting from the labelled collection of terminal
ambiguity fibres, one recovers the terminal species and hence the
entire terminal radical layer as a semisimple boundary bimodule. Thus,
under the hypotheses above, terminal ambiguity geometry encodes not
merely numerical or local invariants, but the full semisimple boundary
structure carried by \(J^{n-1}\).

The natural remaining question is whether any information is lost in
passing between terminal species and terminal ambiguity geometry. The
next section compares these structures directly and proves that, under
the standing rank hypotheses, they are equivalent.

\section{Terminal Ambiguity Geometry and Terminal Species}

The previous sections established two complementary reconstruction
results. On the one hand, terminal ambiguity geometry determines the
terminal species under suitable rank hypotheses. On the other hand, the
classification theorem constructs the ambiguity geometry directly from
the terminal species. The purpose of this section is to make this
relationship precise. We first show that terminal ambiguity geometry
depends only on the terminal species. We then prove that, under the
standing rank assumptions, the two structures determine one another.
This yields a geometric realization of the terminal species and
identifies terminal ambiguity geometry as an equivalent encoding of the
terminal boundary data.

We begin with the easier direction. If two algebras have the same
terminal species, then their terminal ambiguity geometries agree. In
other words, the ambiguity geometry cannot detect information beyond
that already contained in the terminal species. For convenience, we
write
\[
\mathfrak A^{\mathrm{top}}(\Lambda)
=
\left\{
\mathcal F^{<n,\mathrm{top},s}_{ij}
:
V_{ij}\neq 0,\;
1\leq s\leq \ell_{ij}
\right\}_{i,j,s},
\qquad
\ell_{ij}=\dim_{D_j}V_{ij},
\]
for the labelled terminal ambiguity geometry of \(\Lambda\).

\begin{theorem}\label{thm:reconstruction-boundary} 
Let \(\Lambda\) and \(\Gamma\) be finite-dimensional algebras with radicals 
\(J_\Lambda\) and \(J_\Gamma\), satisfying $J_\Lambda^n=0$, $J_\Gamma^n=0$. Assume that their terminal species agree in the following labelled sense. 
There are primitive idempotents $e_1,\ldots,e_t\in \Lambda$ and $f_1,\ldots,f_t\in \Gamma$ 
such that, for every \(i\), there is an isomorphism of division algebras $\theta_i: D_i^\Lambda =  e_i\Lambda e_i/e_iJ_\Lambda e_i  \xrightarrow{\sim} f_i\Gamma f_i/f_iJ_\Gamma f_i 
= D_i^\Gamma$, and, for every pair \((i,j)\), there is an isomorphism ${}_{D_j^\Lambda}\!\left(e_jJ_\Lambda^{\,n-1}e_i\right){}_{D_i^\Lambda}
\cong {}_{D_j^\Gamma}\!\left(f_jJ_\Gamma^{\,n-1}f_i\right){}_{D_i^\Gamma}$ of \((D_j^\Lambda,D_i^\Lambda)\)-bimodules, where the bimodule structure on 
$f_jJ_\Gamma^{\,n-1}f_i$ is obtained by restriction of scalars along  \(\theta_j\) on the left and \(\theta_i\) on the right. Then the labelled terminal ambiguity geometries of \(\Lambda\) and \(\Gamma\) agree: $\mathfrak A^{\mathrm{top}}(\Lambda) \cong \mathfrak A^{\mathrm{top}}(\Gamma)$. In particular, terminal ambiguity geometry cannot distinguish finite-dimensional 
algebras with the same terminal species. 
\end{theorem}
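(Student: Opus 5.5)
The plan is to compute each fibre from the terminal species alone, using Theorem~\ref{thm:grassmannian-classification}, and then transport the result along the given isomorphisms. That theorem gives, for every pair \((i,j)\) with \(V_{ij}\neq0\) and every \(1\le s\le\ell_{ij}\), a natural bijection
\[
\mathcal F^{<n,\mathrm{top},s}_{ij}(\Lambda)\cong\operatorname{Gr}_{D_j^\Lambda}\bigl(\ell_{ij}-s,e_jJ_\Lambda^{n-1}e_i\bigr)/(D_i^\Lambda)^\times .
\]
The same statement holds for \(\Gamma\) with the \(f\)'s. The right-hand side is built only from the following data: the left \(D_j\)-module structure, which determines which submodules are admissible; the right \(D_i\)-action, which determines the orbits; and the group \(D_i^\times\). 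All of this is part of the terminal species. So the theorem should reduce to showing that an isomorphism of terminal species induces a bijection between the corresponding orbit spaces.

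Fix \((i,j)\), and let \(\psi_{ij}:e_jJ_\Lambda^{n-1}e_i\to f_jJ_\Gamma^{n-1}f_i\) be the given bimodule isomorphism, with \(\Gamma\)'s side twisted by \(\theta_j,\theta_i\). The first step checks the index set. Left semilinearity, \(\psi_{ij}(dv)=\theta_j(d)\psi_{ij}(v)\), gives \(\ell_{ij}^\Lambda=\ell_{ij}^\Gamma\), since dimension over a division algebra is preserved by a \(\theta_j\)-semilinear bijection. Likewise \(V_{ij}^\Lambda\neq0\) if and only if \(V_{ij}^\Gamma\neq0\). Hence the two labelled families are indexed by the same triples \((i,j,s)\).

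The second step checks that \(\psi_{ij}\) carries left \(D_j^\Lambda\)-submodules \(U\) to left \(D_j^\Gamma\)-submodules. This uses the surjectivity of \(\theta_j\), and codimension is preserved as well. So \(\psi_{ij}\) induces a bijection
\[
\operatorname{Gr}_{D_j^\Lambda}(\ell_{ij}-s,V^\Lambda_{ij})\to\operatorname{Gr}_{D_j^\Gamma}(\ell_{ij}-s,V^\Gamma_{ij}).
\]
The third step checks equivariance: \(\psi_{ij}(Ua)=\psi_{ij}(U)\theta_i(a)\) for \(a\in(D_i^\Lambda)^\times\). Since \(\theta_i\) restricts to a group isomorphism \((D_i^\Lambda)^\times\to(D_i^\Gamma)^\times\), the bijection carries orbits to orbits and descends to a bijection of the orbit spaces. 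Composing with the two classification bijections then gives \(\mathcal F^{<n,\mathrm{top},s}_{ij}(\Lambda)\cong\mathcal F^{<n,\mathrm{top},s}_{ij}(\Gamma)\). Concretely, this is the map \([M_U]\mapsto[M_{\psi_{ij}(U)}]\), and it is well defined because \(M_U\cong M_{U'}\) if and only if \(U'=Ua\).

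The remaining step, and the one I expect to need the most care, is making precise what \(\mathfrak A^{\mathrm{top}}(\Lambda)\cong\mathfrak A^{\mathrm{top}}(\Gamma)\) means. I would interpret it as a family of bijections indexed by \((i,j,s)\) that respect the labels and the orbit-space structure (Grassmannian modulo unit group). I would take care that the comparison never passes through modules over different algebras, only through the parametrizations. The modules \(M_U\) and \(M_{\psi(U)}\) live over different algebras, so the identification has to be stated at the level of the parameter spaces supplied by Theorem~\ref{thm:grassmannian-classification}. With that interpretation, the final assertion follows immediately: two algebras with the same terminal species have the same ambiguity geometry, so the geometry carries no information beyond the species.
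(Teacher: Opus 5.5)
Your proposal is correct and follows the paper's proof. Both apply Theorem~\ref{thm:grassmannian-classification} to each algebra and transport the resulting Grassmannian orbit spaces along the species data $(\theta_i,\theta_j,\psi_{ij})$, with zero components matching label by label; you spell out the checks the paper compresses into the phrase that the identification ``intertwines the corresponding right unit-group actions'' (equality of $\ell_{ij}$, admissible submodules going to admissible submodules via surjectivity of $\theta_j$, and $\theta_i$-equivariance).
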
 
 
\begin{proof} 
For each pair \((i,j)\), set $V_{ij}^{\Lambda}=e_jJ_\Lambda^{\,n-1}e_i$ and $V_{ij}^{\Gamma}=f_jJ_\Gamma^{\,n-1}f_i$.
By hypothesis, after identifying 
\(D_i^\Lambda\cong D_i^\Gamma\) via \(\theta_i\) and
\(D_j^\Lambda\cong D_j^\Gamma\) via \(\theta_j\), one has 
\(V_{ij}^{\Lambda}\cong V_{ij}^{\Gamma}\) 
as bimodules over the same pair of division algebras. By 
Theorem~\ref{thm:grassmannian-classification}, for every admissible rank \(s\),
\[
\mathcal F^{<n,\mathrm{top},s}_{\Lambda,ij} 
\cong 
\operatorname{Gr}_{D_j^\Lambda}
\!\left(
\ell_{ij}-s,V_{ij}^{\Lambda}
\right)/(D_i^\Lambda)^\times,
\]
and 
\[
\mathcal F^{<n,\mathrm{top},s}_{\Gamma,ij} 
\cong 
\operatorname{Gr}_{D_j^\Gamma}
\!\left(
\ell_{ij}-s,V_{ij}^{\Gamma}
\right)/(D_i^\Gamma)^\times,
\]
where $\ell_{ij} =\dim_{D_j^\Lambda}V_{ij}^{\Lambda}=\dim_{D_j^\Gamma}V_{ij}^{\Gamma}$. Using the chosen isomorphisms 
\(\theta_i:D_i^\Lambda\xrightarrow{\sim}D_i^\Gamma\),
\(\theta_j:D_j^\Lambda\xrightarrow{\sim}D_j^\Gamma\),
and the corresponding bimodule isomorphism 
\(V_{ij}^{\Lambda}\cong V_{ij}^{\Gamma}\), the two Grassmannians are 
identified, and this identification intertwines the corresponding right
unit-group actions. Hence, for every pair \((i,j)\) and every admissible
rank \(s\),
\[
\mathcal F^{<n,\mathrm{top},s}_{\Lambda,ij} 
\cong 
\mathcal F^{<n,\mathrm{top},s}_{\Gamma,ij}.
\]
If \(V_{ij}^{\Lambda}=0\), then by the assumed species agreement also
\(V_{ij}^{\Gamma}=0\), so the pair \((i,j)\) contributes no terminal
ambiguity fibres to either labelled collection. Thus the zero and
nonzero components agree label by label. Therefore the labelled
collections of terminal ambiguity fibres agree: $\mathfrak A^{\mathrm{top}}(\Lambda) \cong
\mathfrak A^{\mathrm{top}}(\Gamma)$. Thus terminal ambiguity geometry depends only on the terminal species. In 
particular, it cannot distinguish algebras whose terminal species are 
isomorphic. 
\end{proof}

The theorem shows that the terminal species forms a reconstruction
boundary for terminal ambiguity geometry. Once the residue division
algebras and terminal module components are fixed, the associated
ambiguity geometry is completely determined. Thus every invariant arising from terminal ambiguity geometry factors
through the terminal species. The remaining question is whether any
information is lost in passing from the species to its geometry.

The reconstruction results obtained earlier suggest that, under suitable
rank hypotheses, no information is lost. Indeed, the ambiguity geometry
already recovers the residue division algebras and terminal module
components from which the species is assembled. The next theorem shows
that this reconstruction is complete.

\begin{theorem}\label{thm:geometry-species-equivalence} 
Let \(\Lambda\) be a finite-dimensional algebra with radical \(J\) satisfying $J^n=0$. Let $e_1,\ldots,e_t$ be a complete set of primitive idempotents. For each \(i,j\), set 
$D_i=e_i\Lambda e_i/e_iJe_i$, $D_j=e_j\Lambda e_j/e_jJe_j$, $V_{ij}=e_jJ^{n-1}e_i$, $r_{ij}=\dim_{D_i}V_{ij}$. Assume that $D_j\cong k$ whenever $V_{ij}\neq 0$, and that for every \(i\) with $J^{n-1}P_i\neq 0$, there exists at least one \(j\) such that $r_{ij}\ge 3$. 
Let
\[
\mathfrak A^{\mathrm{top},1}(\Lambda)
=
\left\{
\mathcal F^{<n,\mathrm{top},1}_{ij}
\right\}_{i,j}
\]
denote the labelled rank-one terminal ambiguity geometry, where $\mathcal F^{<n,\mathrm{top},1}_{ij}=\varnothing$ when $V_{ij}=0$. Then 
$\mathfrak A^{\mathrm{top},1}(\Lambda)$ determines and is determined by the terminal species
\[
\mathsf{Sp}^{\mathrm{top}}(\Lambda)
=
\bigl(D_i,{}_{D_j}V_{ij}{}_{D_i}\bigr)_{i,j},
\]
up to the standard opposite-division-algebra ambiguity for the recovered residue division algebras. Equivalently, under the stated hypotheses, the rank-one terminal
ambiguity geometry and the terminal species are mutually recoverable;
that is, the labelled rank-one terminal ambiguity geometry and the
terminal species determine one another.
\end{theorem}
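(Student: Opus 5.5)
The equivalence splits into two directions, and each follows from a result already established.

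For the direction from species to geometry, I would apply Theorem~\ref{thm:reconstruction-boundary} with \(\Gamma=\Lambda\), or more directly Theorem~\ref{thm:grassmannian-classification} with \(s=1\). For each pair \((i,j)\) with \(V_{ij}\neq0\), the fibre \(\mathcal F^{<n,\mathrm{top},1}_{ij}\) is identified with \(\operatorname{Gr}_{D_j}(\ell_{ij}-1,V_{ij})/D_i^\times\). This orbit space is built only from the bimodule \({}_{D_j}V_{ij}{}_{D_i}\). For pairs with \(V_{ij}=0\), the convention \(\mathcal F^{<n,\mathrm{top},1}_{ij}=\varnothing\) is again read off from the species. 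Hence any two algebras with labelled-isomorphic terminal species have labelled-isomorphic rank-one geometries, exactly as in the proof of Theorem~\ref{thm:reconstruction-boundary} restricted to \(s=1\). This direction needs no rank hypothesis.

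For the direction from geometry to species, I would invoke Theorem~\ref{thm:global-species-reconstruction}, whose hypotheses coincide with those stated here. The labelled family \(\{\mathcal F^{<n,\mathrm{top},1}_{ij}:V_{ij}\neq0\}\) is the same data as \(\mathfrak A^{\mathrm{top},1}(\Lambda)\), because the empty entries exactly mark the vanishing components. The reconstruction then proceeds in three steps.
\begin{enumerate}
\item For each \(i\) with \(J^{n-1}P_i\neq0\), choose \(j\) with \(r_{ij}\ge3\). Corollary~\ref{cor:local-species-reconstruction} and Theorem~\ref{thm:division-algebra-recovery} then recover \(D_i\), up to the opposite-algebra ambiguity.
\item For every \(j\) with \(V_{ij}\neq0\), Corollary~\ref{cor:terminal-rank-recovery} recovers the rank \(r_{ij}\).
\item Since \(D_j\cong k\) and right \(D_i\)-modules are free, this determines the bimodule as \({}_kD_i^{\,r_{ij}}{}_{D_i}\).
\end{enumerate}
I would also need to recover \(D_i\) for those \(i\) with \(J^{n-1}P_i=0\). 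I expect this to fail in general, since such \(D_i\) appear in no fibre. In that case the statement should be read as recovery of the species components \((D_i,V_{ij})\) that actually carry nonzero terminal data. I would state this explicitly, matching the convention of Theorem~\ref{thm:global-species-reconstruction}.

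The main obstacle is making \emph{determines} precise for the geometry side. The fibre in Theorem~\ref{thm:grassmannian-classification} is only a set, whereas the reconstruction uses the projective-geometry structure on it. Concretely, this means the lines and subspaces transported via Theorem~\ref{thm:nonsplit-projective-space} to \(\mathbb P_{D_i}(\operatorname{Hom}_{D_i}(V_{ij},D_i^*))\). I would therefore fix, as part of the data \(\mathfrak A^{\mathrm{top},1}(\Lambda)\), the incidence structure induced by this identification. I would then check that it is intrinsic, that is, independent of the noncanonical choices up to projective isomorphism, so that both directions are maps between isomorphism classes.

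Finally, the two constructions should be mutually inverse up to the stated ambiguity. Species \(\mapsto\) geometry \(\mapsto\) species returns \((D_i,{}_kD_i^{r_{ij}}{}_{D_i})\), which is isomorphic to the original species up to replacing \(D_i\) by \(D_i^{\mathrm{op}}\). Geometry \(\mapsto\) species \(\mapsto\) geometry returns \(\mathbb P_{D_i}^{r_{ij}-1}\), which is isomorphic to the original fibre. The opposite-algebra replacement does not change this, because \(\mathbb P_{D^{\mathrm{op}}}(D^{\mathrm{op}\,r})\) is isomorphic to \(\mathbb P_D(D^r)\) as a projective geometry via duality only when \(r\ge3\) allows the fundamental theorem. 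I would check this last identification carefully.
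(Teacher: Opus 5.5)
Your two directions follow the paper's proof. Species $\Rightarrow$ geometry goes through Theorem~\ref{thm:grassmannian-classification} at $s=1$ and the identification of Theorem~\ref{thm:division-projectivization}. Geometry $\Rightarrow$ species uses the same three steps as Theorem~\ref{thm:global-species-reconstruction}, which the paper simply re-runs.

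Your caveat about unrecoverable $D_i$ is correct, and the paper glosses over it. It is narrower than you state, though. If $e_iJ^{n-1}\neq0$, then some $V_{hi}\neq0$ has left algebra $D_i$, so the hypothesis forces $D_i\cong k$. Only vertices with $J^{n-1}P_i=0=e_iJ^{n-1}$ are lost; compare $\Lambda_0\times\mathbb H$ with $\Lambda_0\times\mathbb R$ over $k=\mathbb R$.

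Your ``main obstacle'' also resolves without choices. The orbit of $U$ corresponds to the right $D_i$-hyperplane $\bigcap_{a\in D_i^\times}Ua$. So the fibre is canonically the dual projective space of the right $D_i$-space $V_{ij}$, and lines are pencils through codimension-two right submodules.

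There are two problems.

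First, drop your last paragraph. The map $U\mapsto U^\perp$ reverses inclusions, so it identifies $\mathbb P_{D^{\mathrm{op}}}^{\,r-1}$ with the \emph{dual} of $\mathbb P_D^{\,r-1}$, not with $\mathbb P_D^{\,r-1}$ itself. For $r\ge3$ the Fundamental Theorem gives $\mathbb P_D^{\,r-1}\cong\mathbb P_{D^{\mathrm{op}}}^{\,r-1}$ only if $D\cong D^{\mathrm{op}}$, which fails for a central division algebra of degree $3$ over $\mathbb Q$. The paragraph is also unnecessary. With the left-module convention fixed, the Fundamental Theorem gives $D\cong E$ outright, and the statement asks only for the two determinations, not for mutually inverse constructions.

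Second, your step~3 has a gap that the paper shares. The Fundamental Theorem recovers $D_i$ only as an abstract ring, while ${}_kV_{ij}{}_{D_i}$ depends on how $k$ sits inside $D_i$. Take $k=\mathbb C(t)$ and $D=k(\sqrt t)$, which is isomorphic to $k$ as a ring. The algebras $\bigl(\begin{smallmatrix}k&D^3\\0&D\end{smallmatrix}\bigr)$ and $\bigl(\begin{smallmatrix}k&k^3\\0&k\end{smallmatrix}\bigr)$ both satisfy all hypotheses with $n=2$. Their only nonempty fibres, $\mathbb P^2_D$ and $\mathbb P^2_k$, are isomorphic projective planes, yet $\dim_kV_{21}$ is $6$ for the first and $3$ for the second. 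So ``determines'' holds only with $D_i$ taken up to ring isomorphism, or after the geometry is enriched by the $k$-structure. The problem disappears when ring isomorphisms are automatically $k$-linear, for example over a prime field.
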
 
 
\begin{proof} 
First, the terminal species determines the terminal ambiguity geometry. The 
terminal species contains, for each \(i\), the residue division algebra 
\(D_i\) and, for each pair \((i,j)\), the bimodule ${}_{D_j}V_{ij}{}_{D_i}={}_{D_j}(e_jJ^{n-1}e_i){}_{D_i}$ If \(V_{ij}=0\), then the corresponding labelled 
ambiguity fibre is empty. If \(V_{ij}\neq 0\), then by hypothesis \(D_j\cong k\), so the left \(D_j\)-submodules of \(V_{ij}\) are precisely its \(k\)-subspaces. By Theorem~\ref{thm:grassmannian-classification}, specialized to \(s=1\),
\[
\mathcal F^{<n,\mathrm{top},1}_{ij}
\cong
\operatorname{Gr}_{k}
\bigl(\dim_kV_{ij}-1,V_{ij}\bigr)/D_i^\times.
\]
Identifying the Grassmannian of \(k\)-hyperplanes with
\(\mathbb P_k(V_{ij}^*)\), this becomes
\[
\mathcal F^{<n,\mathrm{top},1}_{ij}
\cong
\mathbb P_k(V_{ij}^*)/D_i^\times.
\]
By Theorem~\ref{thm:division-projectivization}, this quotient is naturally
identified with $\mathbb P_{D_i}\!\left(\operatorname{Hom}_{D_i}(V_{ij},D_i^*)\right)$,
where \(D_i^*=\operatorname{Hom}_k(D_i,k)\). Thus the terminal species
determines every labelled rank-one terminal ambiguity fibre, and hence determines
\(\mathfrak A^{\mathrm{top},1}(\Lambda)\). Conversely, the labelled rank-one terminal ambiguity geometry determines the terminal 
species under the stated hypotheses. Since the geometry is labelled by all 
pairs \((i,j)\), it records exactly which components are empty, hence which 
\(V_{ij}\) are zero. For each \(i\) with \(J^{n-1}P_i\neq 0\), the rank hypothesis provides some 
\(j\) with \(r_{ij}\ge 3\). For this pair, Theorem~\ref{thm:division-algebra-recovery} applied to 
$\mathcal F^{<n,\mathrm{top},1}_{ij} \cong \mathbb P_{D_i}^{\,r_{ij}-1}$ recovers \(D_i\) up to the standard opposite-division-algebra ambiguity. 
 
Now fix this \(i\). For every \(j\) with \(V_{ij}\neq 0\), the projective 
dimension of \(\mathcal F^{<n,\mathrm{top},1}_{ij}\) recovers 
\(r_{ij}=\dim_{D_i}V_{ij}\). Since every finite-dimensional right 
\(D_i\)-module is free, the pair \((D_i,r_{ij})\) determines the isomorphism 
class of \(V_{ij}\cong D_i^{\,r_{ij}}\) as a right \(D_i\)-module. Since
\(D_j\cong k\), the left \(D_j\)-action is the scalar \(k\)-action. Thus, for each \(i\), the labelled ambiguity geometry recovers all components 
$\bigl(D_i,{}_{D_j}V_{ij}{}_{D_i}\bigr)_j$. Repeating this for every \(i\) recovers the terminal 
species $\mathsf{Sp}^{\mathrm{top}}(\Lambda)=\bigl(D_i,{}_{D_j}V_{ij}{}_{D_i}\bigr)_{i,j}$, up to the standard opposite-division-algebra ambiguity. Combining the two directions, the labelled terminal ambiguity geometry and the terminal species determine one another.
\end{proof}

The theorem identifies the labelled rank-one terminal ambiguity geometry
and the terminal species as two descriptions of the same recoverable
terminal data, under the stated hypotheses. The species provides an
algebraic description in terms of residue division algebras and terminal
module components, while the rank-one ambiguity geometry provides a
projective-geometric realization of this data.

Consequently, under the hypotheses of the theorem, the terminal species
and the labelled rank-one terminal ambiguity geometry determine one
another, up to the stated opposite-division-algebra ambiguity. Thus the
algebraic data encoded by the terminal species can be recovered from the
rank-one ambiguity fibres, while those fibres are themselves determined
by the terminal species. This equivalence forms the conceptual culmination of the reconstruction
theory developed in the preceding sections and provides the foundation
for the study of automorphisms and symmetries in the next section.

\section{Symmetries of Terminal Ambiguity Geometry}

The rank-one projective description of terminal ambiguity also determines
the projective symmetries of the corresponding ambiguity fibres. In the
projective case considered here, these symmetries are governed by the
classical semilinear projective groups. Thus the geometry of terminal
ambiguity determines not only the points of the ambiguity fibre, but also
its intrinsic projective symmetry.

For a division algebra \(D\) and an integer \(r\ge3\), we denote by
\(P\Gamma L_r(D)\) the semilinear projective group of \(D^r\), equivalently
the automorphism group of the projective geometry
\(\mathbb P_D^{\,r-1}\).

\begin{theorem}
Let \(\Lambda\) be a finite-dimensional algebra with radical \(J\)
satisfying $J^n=0$. Let $D_i=e_i\Lambda e_i/e_iJe_i$,
$D_j=e_j\Lambda e_j/e_jJe_j$, $V_{ij}=e_jJ^{n-1}e_i$,
$r_{ij}=\dim_{D_i}V_{ij}$. Assume $D_j\cong k$ and $r_{ij}\ge 3$.
Set $W_{ij}=\operatorname{Hom}_{D_i}(V_{ij},D_i^*)$,
$D_i^*=\operatorname{Hom}_k(D_i,k)$. Then there is a canonical identification
\[
\operatorname{Aut}_{\mathrm{proj}}
\!\left(
\mathcal F^{<n,\mathrm{top},1}_{ij}
\right)
\cong
\operatorname{Aut}_{\mathrm{proj}}
\!\left(
\mathbb P_{D_i}(W_{ij})
\right).
\]
Moreover, after choosing a left \(D_i\)-module isomorphism
$W_{ij}\cong D_i^{\,r_{ij}}$, there is a noncanonical isomorphism
\[
\operatorname{Aut}_{\mathrm{proj}}
\!\left(
\mathcal F^{<n,\mathrm{top},1}_{ij}
\right)
\cong
P\Gamma L_{r_{ij}}(D_i),
\]
up to the standard left/right opposite-algebra convention.
\end{theorem}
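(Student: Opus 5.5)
The plan is to transport the projective structure along the natural bijections already established, and then invoke the Fundamental Theorem of Projective Geometry for the second statement.

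First, since \(D_j\cong k\), Theorem~\ref{thm:grassmannian-classification} with \(s=1\) identifies \(\mathcal F^{<n,\mathrm{top},1}_{ij}\) with the orbit space of \(k\)-hyperplanes of \(V_{ij}\) under the right action of \(D_i^\times\). Sending a hyperplane to its annihilator line identifies the hyperplanes with \(\mathbb P_k(V_{ij}^*)\). Theorem~\ref{thm:division-projectivization} then gives a natural bijection
\[
\beta:\mathcal F^{<n,\mathrm{top},1}_{ij}\xrightarrow{\ \sim\ }\mathbb P_k(V_{ij}^*)/D_i^\times\xrightarrow{\ \sim\ }\mathbb P_{D_i}(W_{ij}).
\]
None of these maps involves a choice, since each is built from \(\Phi(\lambda)(v)(d)=\lambda(vd)\) and from the passage to annihilators.

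The projective structure on \(\mathcal F^{<n,\mathrm{top},1}_{ij}\) is, by definition here, the one transported along \(\beta\). Its projective linear subspaces are the preimages of the subsets \(\mathbb P_{D_i}(U)\) for left \(D_i\)-submodules \(U\subseteq W_{ij}\). With this convention, conjugation \(g\mapsto\beta^{-1}g\beta\) is a group isomorphism
\[
\operatorname{Aut}_{\mathrm{proj}}\bigl(\mathbb P_{D_i}(W_{ij})\bigr)\to\operatorname{Aut}_{\mathrm{proj}}\bigl(\mathcal F^{<n,\mathrm{top},1}_{ij}\bigr),
\]
and it is canonical because \(\beta\) is.

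To make the structure look intrinsic, I would also describe the subspaces directly. The image under \(\Phi\) of a left \(D_i\)-submodule of \(W_{ij}\) is a \(D_i^\times\)-stable \(k\)-subspace of \(V_{ij}^*\), and such subspaces correspond to right \(D_i\)-submodules of \(V_{ij}\) via annihilators. So the lines of the fibre are the families of isomorphism classes \([M_U]\) with \(U\) containing a fixed codimension-two right \(D_i\)-stable subspace. Checking this \(D_i\)-stability and annihilator compatibility is the one verification I expect to be fiddly. In particular, under \(\Phi\), left multiplication by \(a\) corresponds to the action \(\lambda\mapsto\lambda(-\,a)\), and one must confirm that this is what matches \(D_i\)-stability.

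For the second statement, choose a left \(D_i\)-isomorphism \(W_{ij}\cong D_i^{\,r_{ij}}\), which exists by Theorem~\ref{thm:nonsplit-projective-space}. Conjugating by it identifies \(\operatorname{Aut}_{\mathrm{proj}}(\mathbb P_{D_i}(W_{ij}))\) with \(\operatorname{Aut}_{\mathrm{proj}}(\mathbb P_{D_i}^{\,r_{ij}-1})\). Since \(r_{ij}\ge3\), the projective dimension is at least \(2\), and the Fundamental Theorem of Projective Geometry (the same input used in Theorem~\ref{thm:division-algebra-recovery}) applies. It shows that every collineation is induced by a bijective semilinear map \(D_i^{r_{ij}}\to D_i^{r_{ij}}\) relative to some automorphism of \(D_i\), and that two such maps induce the same collineation iff they differ by a nonzero scalar. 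Conversely, every semilinear bijection induces a collineation. Hence the automorphism group is \(\Gamma L_{r_{ij}}(D_i)\) modulo scalars, which is \(P\Gamma L_{r_{ij}}(D_i)\) by the definition fixed above. The isomorphism depends on the chosen basis, and writing scalars on the right instead of the left replaces \(D_i\) by \(D_i^{\mathrm{op}}\). This gives the stated opposite-algebra convention.

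The main obstacle is not the group theory, which is classical, but fixing which projective structure \(\mathcal F^{<n,\mathrm{top},1}_{ij}\) carries. If it is simply transported along \(\beta\), the first claim is nearly tautological. If it is meant to be intrinsic, the lattice check in the third paragraph is where the real work lies, and I would do it first.
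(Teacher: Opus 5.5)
Your proposal is correct and follows the paper's proof: the paper likewise composes the $s=1$ case of Theorem~\ref{thm:grassmannian-classification} with Theorem~\ref{thm:division-projectivization} to get a canonical bijection $\mathcal F^{<n,\mathrm{top},1}_{ij}\cong\mathbb P_{D_i}(W_{ij})$, transports projective automorphisms along it, and then applies the Fundamental Theorem of Projective Geometry and a choice of isomorphism $W_{ij}\cong D_i^{\,r_{ij}}$. Your explicit remark that the fibre carries the transported projective structure, together with your intrinsic description of its lines as classes $[M_U]$ with $U$ containing a fixed right $D_i$-submodule of $D_i$-codimension two, makes precise a point the paper leaves implicit.
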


\begin{proof}
Since $D_j\cong k$, Theorem~\ref{thm:grassmannian-classification},
specialized to \(s=1\), gives $\mathcal F^{<n,\mathrm{top},1}_{ij} \cong \mathbb P_k(V_{ij}^*)/D_i^\times$. By Theorem~\ref{thm:division-projectivization}, there is therefore a
canonical identification $\mathcal F^{<n,\mathrm{top},1}_{ij} \cong \mathbb P_{D_i}(W_{ij})$,
where
$W_{ij}=\operatorname{Hom}_{D_i}(V_{ij},D_i^*)$.
Therefore the projective automorphism group of the terminal ambiguity
fibre is canonically identified with the automorphism group of the
projective geometry \(\mathbb P_{D_i}(W_{ij})\). That is,
\[
\operatorname{Aut}_{\mathrm{proj}}
\!\left(
\mathcal F^{<n,\mathrm{top},1}_{ij}
\right)
\cong
\operatorname{Aut}_{\mathrm{proj}}
\!\left(
\mathbb P_{D_i}(W_{ij})
\right).
\]
Since \(r_{ij}\ge 3\), the projective geometry
\(\mathbb P_{D_i}(W_{ij})\) has projective dimension at least \(2\). By the
Fundamental Theorem of Projective Geometry, every projective automorphism of
\(\mathbb P_{D_i}(W_{ij})\) is induced by a semilinear automorphism of the
underlying left \(D_i\)-vector space \(W_{ij}\). Thus
\[
\operatorname{Aut}_{\mathrm{proj}}
\!\left(
\mathbb P_{D_i}(W_{ij})
\right)
\cong
P\Gamma L(W_{ij}).
\]

Finally, since \(V_{ij}\) is a finite-dimensional right \(D_i\)-module of rank
\(r_{ij}\), one has noncanonically
\[
W_{ij}
=
\operatorname{Hom}_{D_i}(V_{ij},D_i^*)
\cong
D_i^{\,r_{ij}}
\]
as a left \(D_i\)-module. Choosing such an isomorphism gives
\[
P\Gamma L(W_{ij})
\cong
P\Gamma L_{r_{ij}}(D_i).
\]
The choice is noncanonical, and the usual left/right convention may replace
\(D_i\) by \(D_i^{\mathrm{op}}\). Hence
\[
\operatorname{Aut}_{\mathrm{proj}}
\!\left(
\mathcal F^{<n,\mathrm{top},1}_{ij}
\right)
\cong
P\Gamma L_{r_{ij}}(D_i)
\]
noncanonically.
\end{proof}

Thus, in the rank-one projective case, the projective symmetries of a
terminal ambiguity fibre are completely described by classical projective
geometry. The ambiguity created by radical truncation therefore carries
not only a projective parameter space, but also the corresponding
semilinear projective symmetry.

\section{Conclusion}

The results of this paper develop a geometric approach to the ambiguity that remains after all proper radical truncations of a module have been fixed. This residual ambiguity is concentrated in the terminal radical layer \(J^{n-1}\). For primitive idempotents \(e_i,e_j\), the local terminal component \(V_{ij}=e_jJ^{n-1}e_i\) carries the natural bimodule structure \({}_{D_j}V_{ij}{}_{D_i}\), where \(D_h=e_h\Lambda e_h/e_hJe_h\). The left \(D_j\)-structure determines which terminal submodules are admissible, while the right \(D_i\)-structure determines when two admissible choices yield isomorphic quotient modules. This leads to the classification \(\mathcal F^{<n,\mathrm{top},s}_{ij}\cong\operatorname{Gr}_{D_j}(\ell_{ij}-s,V_{ij})/D_i^\times\), where \(\ell_{ij}=\dim_{D_j}V_{ij}\). Thus terminal ambiguity is organized into a hierarchy of Grassmannian orbit spaces indexed by terminal quotient rank, with the rank-one projective fibre appearing as the first case. These orbit spaces classify actual modules having identical proper radical truncation towers but different terminal completions. In particular, transitivity of the \(D_i^\times\)-action characterizes rigidity, while failure of transitivity gives nonisomorphic terminal completions with the same proper truncation data. The higher-rank classification therefore shows that the information left unresolved by proper radical truncation is not exhausted by the rank-one projective fibre.

In the split case, the classification becomes \(\mathcal F^{<n,\mathrm{top},s}_{ij}\cong\operatorname{Gr}_k(m_{ij}-s,V_{ij})\), where \(m_{ij}=\dim_kV_{ij}\). The dimension profile \(\delta_{ij}(s)=s(m_{ij}-s)\) measures the geometric size of the family of rank-\(s\) terminal completions left undetermined by the same proper radical truncation tower. It is maximal when \(s\) is as close as possible to \(m_{ij}/2\), with maximal dimension \(\lfloor m_{ij}^2/4\rfloor\), so higher terminal ranks can exhibit more ambiguity than the rank-one fibre. Over \(k=\mathbb F_q\), the cardinality \(\left|\mathcal F^{<n,\mathrm{top},s}_{ij}\right|={m_{ij}\brack s}_q\) gives the exact number of nonisomorphic rank-\(s\) terminal completions with the prescribed proper radical truncation tower. The significance of this geometry is module-theoretic. The terminal species records the division algebras and bimodules governing the terminal radical layer, while terminal ambiguity geometry describes how that boundary data is realized through families of modules that remain indistinguishable under every proper radical truncation. The Grassmannian classification therefore turns the information lost under radical truncation into a concrete module-completion problem and provides geometric criteria for rigidity and nonuniqueness together with geometric and enumerative measures of the resulting ambiguity.

The rank-one projective geometry also supports the reconstruction results developed in the later sections. Under the stated hypotheses, this geometry recovers the corresponding terminal data. In the projective case, the automorphisms of an individual terminal ambiguity fibre are governed by projective semilinear transformations. The higher-rank Grassmannian classification complements this reconstruction theory by describing the broader hierarchy of terminal completions left unresolved by proper radical truncation. The theory developed here is fundamentally terminal. The relations \(JJ^{n-1}=J^{n-1}J=0\) make \(J^{n-1}\) a semisimple \((\Lambda/J)\)-bimodule and reduce terminal completion to submodule geometry over residue division algebras. For lower radical layers this semisimplicity is generally absent, so analogous ambiguity problems would necessarily involve additional extension data. Understanding whether such lower-layer ambiguity admits natural geometric parameter spaces and corresponding reconstruction principles remains a direction for future investigation.

\bibliographystyle{amsplain}
\bibliography{references}

\end{document}